
\documentclass[]{interact}

\usepackage{epstopdf}
\usepackage[caption=false]{subfig}

\usepackage[numbers,sort&compress]{natbib}
\bibpunct[, ]{[}{]}{,}{n}{,}{,}

\theoremstyle{plain}

\theoremstyle{definition}

\theoremstyle{remark}

\usepackage{graphicx}

\usepackage[utf8]{inputenc} 
\usepackage[T1]{fontenc}    
\usepackage{lmodern}
\usepackage{hyperref}       
\usepackage{url}            
\usepackage{booktabs}       
\usepackage{amsfonts}       
\usepackage{nicefrac}       
\usepackage{microtype}      

\usepackage{epsfig}
\usepackage{amssymb}
\usepackage{amsmath}
\usepackage{amsfonts}
\usepackage{bbding}
\usepackage{array}
\usepackage{caption,tabularx,booktabs}
\usepackage{arydshln}
\usepackage{xargs}                      
\usepackage[pdftex,dvipsnames]{xcolor}  

\usepackage{mathtools}  
\usepackage{amsmath}
\usepackage{amssymb}
\usepackage{tabulary}

\usepackage{algorithm}
\usepackage{algorithmic}
\usepackage{float}
\floatstyle{plain} 
\restylefloat{figure}

\usepackage{hyperref}
\usepackage{breakurl}


\newtheorem{thm}{Theorem}
\newtheorem{lem}{Lemma}
\newtheorem{prop}{Proposition}
\newtheorem{cor}{Corollary}

\newtheorem{ass}{Assumption}

\makeatletter
\newcounter{subthm} 
\let\savedc@thm\c@hyp

\newcommand{\normhyp}{%
  \let\c@hyp\savedc@hyp 
  \renewcommand\thehyp{\arabic{hyp}}%
} 
\makeatother

\makeatletter
\newcounter{subass} 
\let\savedc@ass\c@hyp

\makeatother

\newcommand\tagthis{\addtocounter{equation}{1}\tag{\theequation}}

 
\DeclareMathOperator{\Ocal}{\mathcal{O}} 
\newcommand{\eqdef}{\stackrel{\text{def}}{=}}

\renewcommand{\top}{T}

\usepackage{fancyhdr}

\usepackage{array}
\usepackage{caption,tabularx,booktabs}
\usepackage{arydshln}

\usepackage{xargs}                      
\usepackage[pdftex,dvipsnames]{xcolor}  
\newcommandx{\unsure}[2][1=]{\todo[inline,linecolor=red,backgroundcolor=red!25,bordercolor=red,#1]{#2}}
\newcommandx{\change}[2][1=]{\todo[linecolor=blue,backgroundcolor=blue!25,bordercolor=blue,#1]{#2}}
\newcommandx{\info}[2][1=]{\todo[linecolor=OliveGreen,inline,backgroundcolor=OliveGreen!25,bordercolor=OliveGreen,#1]{#2}}
\newcommandx{\improvement}[2][1=]{\todo[linecolor=Plum,inline,backgroundcolor=Plum!25,bordercolor=Plum,#1]{#2}}



\begin{document}


\title{Inexact SARAH Algorithm for Stochastic Optimization}

\author{
\name{Lam M. Nguyen\textsuperscript{a}\thanks{CONTACT Lam M. Nguyen. Email: LamNguyen.MLTD@ibm.com}, Katya Scheinberg\textsuperscript{b}, and Martin Tak\'{a}\v{c}\textsuperscript{c}}
\affil{\textsuperscript{a}IBM Research, Thomas J. Watson Research Center, Yorktown Heights, NY, USA; \textsuperscript{b}School of Operations Research and Information Engineering, Cornell University, Ithaca, NY, USA; \textsuperscript{c}Department of Industrial and Systems Engineering, Lehigh University, Bethlehem, PA, USA}
}

\maketitle

\begin{abstract}
We develop and analyze a variant of the SARAH algorithm, which does not require computation of the exact gradient. Thus this new method can be applied to general expectation minimization  problems rather than only finite sum problems. While the original SARAH algorithm, as well as its predecessor, SVRG, require an exact gradient computation on each outer iteration, the inexact variant of SARAH (iSARAH), which we develop here,  requires only stochastic gradient computed  on a mini-batch of sufficient size. The proposed method combines variance reduction via sample size selection and iterative stochastic gradient updates. We analyze the convergence rate of the algorithms for strongly convex and non-strongly convex cases,  under smooth assumption with appropriate mini-batch size selected for each case. We show that with an additional, reasonable, assumption iSARAH achieves the best known  complexity among stochastic methods in the case of non-strongly convex stochastic functions. 
\end{abstract}

\begin{keywords}
  Stochastic gradient algorithms; variance reduction; stochastic optimization; smooth convex
\end{keywords}

\section{Introduction}\label{intro}

We consider the problem of stochastic  optimization 
\begin{align*}
\min_{w \in \mathbb{R}^d} \left\{ F(w) = \mathbb{E} [ f(w;\xi) ] \right\}, \tagthis \label{main_prob_expected_risk}  
\end{align*}
where $\xi$ is a random variable and $f$ has a Lipschitz continuous gradient. One of the most popular  applications of this problem is expected risk minimization in supervised learning. 
In this case, random variable $\xi$ represents a random data sample $(x,y)$, or a set of such samples $\{(x_i,y_i)\}_{i \in I}$.  
 We can consider a set of realizations $\{\xi_{[i]}\}_{i=1}^n$ of $\xi$ corresponding to a set of random samples $\{(x_i,y_i)\}_{i=1}^n$, and define $f_i(w) := f(w;\xi_{[i]})$. Then the sample average approximation of $F(w)$,  known as  empirical risk in supervised learning, is written as
\begin{align*}
\min_{w \in \mathbb{R}^d} \left\{ F(w) = \frac{1}{n}
\sum_{i=1}^n f_i(w) \right\}. \tagthis \label{main_prob_empirical_risk}  
\end{align*}

Throughout the paper, we assume  the existence of unbiased gradient estimator, that is $\mathbb{E}[\nabla f(w;\xi)] = \nabla F(w)$ for any fixed $w \in \mathbb{R}^d$. In addition we assume that there exists 
a lower bound of function $F$. 

In recent years, a class of variance reduction methods \cite{schmidt2017minimizing,SAG,SAGA,SVRG,S2GD,nguyen2017sarah} has been proposed for  problem \eqref{main_prob_empirical_risk} which have smaller  computational complexity than both, the full gradient descent method and the stochastic gradient method. All these methods rely on the finite sum form of \eqref{main_prob_empirical_risk} and are, thus, not readily extendable to  \eqref{main_prob_expected_risk}. In particular, SVRG  \cite{SVRG} and SARAH \cite{nguyen2017sarah} are two similar methods that consist of an outer loop, which includes one exact 
gradient  computation at each outer iteration and an inner loop with multiple   iterative stochastic gradient updates. The only difference between
SVRG and SARAH is how the iterative updates are performed in the inner loop. The advantage of SARAH is that the inner loop itself results in a convergent stochastic  gradient algorithm. Hence, it is possible to apply only one-loop of SARAH with sufficiently large number of steps to obtain an approximately optimal solution (in expectation). The convergence behavior of one-loop SARAH is similar to that of the standard stochastic gradient method \cite{nguyen2017sarah}. The multiple-loop SARAH algorithm matches convergence rates of SVRG in the strongly convex case, however, due to its convergent inner loop, it has an additional practical advantage of being able to use an adaptive inner loop size, as was demonstrated in \cite{nguyen2017sarah} for details). 

A version of SVRG algorithm, SCSG, has been recently proposed and analyzed in \cite{LihuaConvex,LihuaNonconvex}. 
While this method has been developed for 
\eqref{main_prob_empirical_risk} it can be directly applied to   \eqref{main_prob_expected_risk} because 
the exact gradient computation is replaced with a mini-batch stochastic gradient. The size of the  inner loop of SCSG is then set to a geometrically distributed random variable with distribution dependent on the size of the mini-batch used in the outer iteration. 
In this paper, we propose and analyze an inexact version of SARAH (iSARAH) which can be applied to solve
\eqref{main_prob_expected_risk}. Instead of exact gradient computation, a mini-batch gradient is computed using
a sufficiently large sample size. We develop total sample complexity analysis for this method under various convexity assumptions on $F(w)$. These complexity  results
are summarized  in Tables \ref{table_stronglyconvex}-\ref{table_generalconvex} and are compared to the result for  SCSG from \cite{LihuaConvex,LihuaNonconvex} when applied to \eqref{main_prob_expected_risk}. We also list the complexity bounds for SVRG, SARAH and SCSG when applied to finite sum problem \eqref{main_prob_empirical_risk}. 

As SVRG, SCSG and SARAH, iSARAH algorithm consists of the {\em outer loop}, which performs variance reduction by computing sufficiently accurate gradient estimate, and the {\em inner loop}, which performs the stochastic gradient updates. 
If only one outer iteration of SARAH is performed and then followed by  sufficiently many  inner iterations, we refer to this algorithm as one-loop SARAH.  In  \cite{nguyen2017sarah} one-loop SARAH is analyzed  and shown to match the complexity of stochastic gradient descent. Here along with multiple-loop iSARAH we analyze one-loop iSARAH, which is obtained from one-loop SARAH by replacing the first full gradient computation with a stochastic gradient based on a sufficiently large mini-batch. 

All our complexity results present the bound on the number of iterations it takes to achieve
$ \mathbb{E}[\|\nabla F(w)\|^2]\leq \epsilon$. These complexity results   are developed under the assumption that $f(w,\xi)$ is $L$-smooth for every realization of the random variable $\xi$. Table \ref{table_stronglyconvex} shows the complexity results in the case when $F(w)$, but not necessarily every realization $f(w,\xi)$, is $\mu$-strongly convex,  with $\kappa=L/\mu$ denoting the condition number.  Notice that results for one-loop iSARAH (and SARAH) are the same in the strongly convex case as in the non-strongly convex case. The convergence rate of the multiple-loop iSARAH, on the other hand, 
is better, in terms of dependence on $\kappa$, than the rate achieved by SCSG, which is the only other variance reduction 
method of the type we consider here,  that applies to  \eqref{main_prob_expected_risk}.  For example, if a small strongly convex perturbation is added to a convex objective function, then $\kappa = \Ocal(\frac{1}{\epsilon})$, and the total complexity of SCSG is  $\mathcal{O}\left( \left(\frac{\kappa}{\epsilon} + \kappa \right) \log\left( \frac{1}{\epsilon} \right) \right) = \mathcal{O}\left( \left( \frac{1}{\epsilon^2} \right) \log\left( \frac{1}{\epsilon} \right) \right)$ while iSARAH with multiple-loops has $\Ocal\left(  \max\left\{  \frac{1}{\epsilon}, \kappa \right\}  \log\left( \frac{1}{\epsilon} \right)   \right) = \mathcal{O}\left( \left( \frac{1}{\epsilon} \right) \log\left( \frac{1}{\epsilon} \right) \right)$. 
The non-strongly convex case is summarized in Table \ref{table_generalconvex}. In this case, the multiple-loop  iSARAH  achieves the best convergence rate with respect to $\epsilon$ among the compared stochastic methods, but this rate is derived under an  additional  assumption (Assumption \ref{ass_generalconvex_01} -- see \eqref{eq_ass_generalconvex_01}) which is discussed in detail in Section~\ref{sec_analysis}.  We note here that Assumption \ref{ass_generalconvex_01} is weaker than the, common in the analysis of stochastic algorithms, assumption that the iterates remain in a bounded set. In  recent paper \cite{paquette2018stochastic} a stochastic line search method is shown to have expected iteration complexity of $\Ocal\left( \frac{1}{\epsilon}  \right)$ under the assumption that the iterates remain in a bounded set, however, no total sample complexity is derived in \cite{paquette2018stochastic}. 

For non-strongly convex and non-convex problems,  we derive convergence rate for one-loop iSARAH with only requiring $\mathbb{E}[\| \nabla f(w;\xi) \|^2]$ finite at $w_*$ (non-strongly convex) and $w_0$ (non-convex). This convergence rate matches that of the general stochastic gradient algorithm, i.e. $\Ocal(\frac{1}{\epsilon^{2}})$, while the one-loop iSARAH can be viewed  as a type of stochastic gradient method with momentum. In order to derive the convergence results in the non-strongly convex and non-convex cases, the learning rate of one-loop iSARAH needs to be chosen in the order of $\Ocal(\epsilon)$, which could be practically undesirable. This is a limitation of one-loop stochastic gradient algorithms.

 Multiple-loop iSARAH method achieves the state-of-the-art complexity result of \\ $\Ocal\left( \frac{\sigma}{\epsilon^{3/2}}  + \frac{\sigma^2}{\epsilon} \right)$,  under bounded variance assumption, which requires that  $\mathbb{E}[\| \nabla f(w;\xi) - \nabla F(w) \|^2 ] \leq \sigma^2$, for some $\sigma > 0$ and $\forall w \in \mathbb{R}^d$. The analysis is a special case  of the analysis ProxSARAH \cite{Pham2019_ProxSARAH} and hence we do not derive it here. This results matches those for two new variants of SARAH, SPIDER and SpiderBoost, that have been proposed in \cite{fang2018spider,wang2018spiderboost}. SPIDER \cite{fang2018spider} uses the SARAH update rule as was originally proposed in \cite{nguyen2017sarah} and the mini-batch version of SARAH in \cite{NguyenLST17a}. SPIDER and SARAH are different in terms of updated iteration, which are $w_{t+1} = w_{t} - \eta (v_t / \|v_t\|)$ and $w_{t+1} = w_{t} - \eta v_t$, respectively. Also, SPIDER does not divide into outer loop and inner loop as SARAH does although SPIDER does also perform a full gradient update after a certain fixed number of iterations.
A recent technical report \cite{wang2018spiderboost} provides an improved version of SPIDER called SpiderBoost which allows a larger learning rate.

\begin{table}[h]
 \centering 
 \scriptsize
\caption{Comparison results (Strongly convex). Criteria: $\| \nabla F (w_{\mathcal{T}}) \|^2 \leq \epsilon $ (*), \\ $F (w_{\mathcal{T}}) - F(w_*) \leq \epsilon $ (**), and $\| w_{\mathcal{T}} -  w_* \|^2 \leq \epsilon $ (***). Assumption: $\mathbb{E}[\| \nabla f(w_*;\xi) \|^2] \leq \sigma_*^2$ (A). Note $\kappa = L/\mu$; ``Ad. Asm.'' is Additional Assumption}
\label{table_stronglyconvex}
\begin{tabular}{|c|c|c|c|}
\hline 
\textbf{Method} & \textbf{Bound} & \textbf{Problem} & \textbf{Ad. Asm.}  \\
\hline
\hline
\textbf{SARAH (one-loop)}  & N/A  & N/A & N/A  \\
\hline
\textbf{SARAH (multiple-loop)}  & $\mathcal{O}\left( (n + \kappa) \log\left( \frac{1}{\epsilon} \right) \right)$  & \eqref{main_prob_empirical_risk} (*) & None     \\
\hline
SVRG  & $\mathcal{O}\left( (n + \kappa) \log\left( \frac{1}{\epsilon} \right) \right)$  & \eqref{main_prob_empirical_risk} (*) & None   \\
\hline
SCSG  & $\mathcal{O}\left(  \left( \min\left\{ \frac{\sigma_*^2}{\mu \epsilon } , n \right\} + \kappa  \right) \log \left( \frac{\max\{ n , \kappa \}}{n}  \cdot \frac{1}{\epsilon} \right) \right)$   & \eqref{main_prob_empirical_risk} (**) & (A)   \\
\hline
\hline
SCSG & $\mathcal{O}\left(  \left( \frac{\sigma_*^2}{\mu \epsilon } + \kappa  \right) \log \left( \frac{1}{\epsilon} \right) \right)$  & \eqref{main_prob_expected_risk} (**) & (A)  \\
\hline
\textbf{iSARAH (one-loop)}  & N/A & N/A & N/A   \\
\hline
{\color{red}\textbf{iSARAH (multiple-loop)}} & $\textcolor{red}{\Ocal\left(  \max\left\{  \frac{\sigma_*^2}{\epsilon}, \kappa \right\} \log\left( \frac{1}{\epsilon} \right)   \right)}$  &  \eqref{main_prob_expected_risk} (*) & (A)    \\
\hline
\end{tabular}
\end{table}

\begin{table}[h]
 \centering 
 \scriptsize
\caption{Comparison results (Non-strongly convex).  Criteria: $\| \nabla F (w_{\mathcal{T}}) \|^2 \leq \epsilon $ (*), \\ $F (w_{\mathcal{T}}) - F(w_*) \leq \epsilon $ (**). Assumption: $\mathbb{E}[\| \nabla f(w_*;\xi) \|^2] \leq \sigma_*^2$ (A), $F(w) - F(w_*) \leq M \| \nabla F(w) \|^2 + N$ (B). ``Ad. Asm.'' is Additional Assumption}
\label{table_generalconvex}
\begin{tabular}{|c|c|c|c| }
\hline 
\textbf{Method} & \textbf{Bound} & \textbf{Problem} & \textbf{Ad. Asm.}  \\
\hline
\hline
\textbf{SARAH (one-loop)} & $\mathcal{O}\left( n + \frac{L}{\epsilon^2} \right)$  & \eqref{main_prob_empirical_risk} (*) & None  \\
\hline
\textbf{SARAH (multiple-loop)} & $\mathcal{O}\left( \left(n + \max\left\{  LM , \frac{L N}{\epsilon}  \right\} \right) \log\left( \frac{1}{\epsilon} \right) \right)$  & \eqref{main_prob_empirical_risk} (*) & (B)  \\
\hline
SVRG   & $\mathcal{O}\left( n + \frac{L \sqrt{n}}{\epsilon} \right)$   & \eqref{main_prob_empirical_risk} (*) & None  \\
\hline
SCSG  & $\mathcal{O}\left( \min \left \{  \frac{\sigma_*^2}{\epsilon^2} , \frac{n L}{\epsilon} \right \}  \right)$ & \eqref{main_prob_empirical_risk} (**) & (A) \\
\hline
\hline
SCSG &  $\mathcal{O}\left( \frac{\sigma_*^2}{\epsilon^2} \right)$  & \eqref{main_prob_expected_risk} (**) & (A)  \\
\hline
{\color{red}\textbf{iSARAH (one-loop)}}  & $\textcolor{red}{\Ocal \left( \frac{\max\{ L , \sigma_*^2 \}}{\epsilon} + \frac{\max\{ L^2 , \sigma_*^4 \}}{\epsilon^2} \right)}$ & \eqref{main_prob_expected_risk} (*) & (A) \\
\hline
{\color{red}\textbf{iSARAH (multiple-loop)}} & $\textcolor{red}{\Ocal\left(  \max\left\{ LM , \frac{\max\{ L N , \sigma_*^2 \}}{\epsilon}  \right \} \log \left( \frac{1}{\epsilon}  \right)  \right)}$  & \eqref{main_prob_expected_risk} (*) & (A) and (B)   \\
\hline
\end{tabular}
\end{table}

Note that, in the original SARAH paper, the result for non-strongly convex case is also derived under the assumption $F(w) - F(w_*) \leq M \| \nabla F(w) \|^2 + N$, for some $M > 0$ and $N > 0$, however this assumption was not explicitly stated. 

\section{The Algorithm}\label{sec_algorithm}

Like SVRG and SARAH, iSARAH consists of the outer loop and the inner loop. The inner loop performs recursive stochastic gradient updates, 
while the outer loop of SVRG and SARAH compute the exact gradient. Specifically, given an iterate $w_0$ at the beginning of each outer loop,    SVRG and SARAH compute $v_0=\nabla F (w_{0})$. 
The only difference between SARAH and iSARAH is that the latter replaces the exact gradient computation by a gradient estimate based on a sample set of size $b$. 

In other words, given a  iterate $w_0$ and a sample set size $b$,  $v_0$ is  a random vector computed as 
\begin{align*}
v_0 = \frac{1}{b} \sum_{i=1}^b \nabla f(w_0; \zeta_{i}), \tagthis \label{eq:v_0}
\end{align*}
where $\left\{\zeta_{i}\right\}_{i=1}^b$ are i.i.d.\footnote{Independent and identically distributed random variables. We note from probability theory that if $\zeta_1,\dots,\zeta_b$ are i.i.d. random variables then $g(\zeta_1),\dots,g(\zeta_b)$ are also i.i.d. random variables for any measurable function $g$.} and $\mathbb{E}[\nabla f(w_0;\zeta_{i}) | w_0] = \nabla F(w_0)$. We have 
\begin{align*}
    \mathbb{E}[ v_0 | w_0 ] = \frac{1}{b} \sum_{i=1}^b \nabla F(w_0)   = \nabla F(w_0).
\end{align*}
The larger $b$ is, the more accurately the gradient estimate $v_0$ approximates  $\nabla F(w_0)$. The key idea of the analysis of  iSARAH is to establish bounds on $b$ which ensure sufficient accuracy for recovering original SARAH convergence rate.

The key step of the algorithm is a recursive update of the stochastic gradient estimate \textit{(SARAH update)} 
\begin{equation}\label{eq:vt}
   v_{t} = \nabla f (w_{t}; \xi_t) - \nabla f(w_{t-1}; \xi_t) + v_{t-1},
\end{equation}
followed by the iterate update
\begin{equation}\label{eq:iterate}
w_{t+1} = w_{t} - \eta v_{t}.
\end{equation}

Let $\mathcal{F}_t = \sigma(w_0,w_1,\dots,w_t)$ be the
$\sigma$-algebra generated by $w_0,w_1,\dots,w_t$. We note that $\xi_t$ is independent of $\mathcal{F}_t$ and that $v_t$ is a  \textit{biased estimator} of the gradient $\nabla F (w_{t})$.
\begin{align*}
\mathbb{E}[ v_{t} | \mathcal{F}_t ] &= \nabla F (w_{t}) - \nabla F(w_{t-1}) + v_{t-1}. 
\end{align*}

In contrast, SVRG update is given by 
\begin{equation}\label{eq:svrgvt}
   v_{t} = \nabla f (w_{t}; \xi_t) - \nabla f(w_{0}; \xi_t) + v_{0} 
\end{equation}
which implies that  $v_t$ is an  \textit{unbiased estimator} of $\nabla F (w_{t})$. 

The outer loop of iSARAH algorithm is summarized in Algorithm \ref{isarah} and the inner loop is summarized in 
Algorithm \ref{isarah_in}. 

\begin{algorithm}[t]
   \caption{Inexact SARAH (iSARAH)}
   \label{isarah}
\begin{algorithmic}
   \STATE {\bfseries Parameters:} the learning rate $\eta > 0$ and the inner loop size $m$, the sample set size $b$.
   \STATE {\bfseries Initialize:} $\tilde{w}_0$.
   \STATE {\bfseries Iterate:}
   \FOR{$s=1,2,\dots, \mathcal{T}$,}
   \STATE $\tilde{w}_s = \text{iSARAH-IN}(\tilde{w}_{s-1},\eta,m, b)$.	
   \ENDFOR
   \STATE \textbf{Output:} $\tilde{w}_\mathcal{T}$.
\end{algorithmic}
\end{algorithm} 

\begin{algorithm}[t]
   \caption{iSARAH-IN$(w_0,\eta,m,b)$}
   \label{isarah_in}
\begin{algorithmic}
   \STATE {\bfseries Input:} $w_0 (= \tilde{w}_{s-1})$ the learning rate $\eta > 0$, the inner loop size $m$, the sample set size $b$.
   \STATE Generate random variables $\left\{\zeta_{i}\right\}_{i=1}^b$ i.i.d.
   \STATE Compute $v_0 = \frac{1}{b} \sum_{i=1}^b \nabla f(w_0; \zeta_{i})$.
   \STATE $w_1 = w_0 - \eta v_0$.
   \STATE {\bfseries Iterate:}
   \FOR{$t=1,\dots,m-1$,}
   \STATE Generate a random variable $\xi_t$
   \STATE $v_{t} = \nabla f (w_{t}; \xi_t) - \nabla f(w_{t-1}; \xi_t) + v_{t-1}$.
   \STATE $w_{t+1} = w_{t} - \eta v_{t}$.
   \ENDFOR
   \STATE Set $\tilde{w} = w_{t}$ with $t$ chosen uniformly at random from $\{0,1,\dots,m\}$
   \STATE \textbf{Output:} $\tilde{w}$
\end{algorithmic}
\end{algorithm} 

 As a variant of SARAH,  iSARAH inherits the special property that a one-loop iSARAH, which is the variant of  Algorithm~\ref{isarah} with $\mathcal{T}=1$ and $m\to \infty$, is a convergent algorithm. In the next section we provide the analysis for both one-loop and multiple-loop versions of i-SARAH. 

\textbf{Convergence criteria}. Our iteration complexity analysis aims to bound the total number of stochastic gradient evaluations needed to achieve a desired bound on the gradient norm. For that we will need to bound the number of outer iterations $\mathcal{T}$  which is needed to  guarantee that 
$\|\nabla F(w_\mathcal{T})\|^2\leq \epsilon$ and also to bound $m$ and $b$. Since the algorithm is stochastic and $w_\mathcal{T}$ is random
the  $\epsilon$-accurate solution is  only achieved in expectation. 
 i.e.,
\begin{equation}\label{eq:accuracy}
\mathbb{E} [\| \nabla F(w_\mathcal{T}) \|^2] \leq \epsilon.
\end{equation}

\section{Convergence Analysis of iSARAH}\label{sec_analysis}

\subsection{Basic Assumptions}

The  analysis of the proposed algorithm will be performed under apropriate subset of the following key assumptions. 
\begin{ass}[$L$-smooth]
\label{ass_Lsmooth}
\textit{$f(w;\xi)$ is $L$-smooth for every realization of $\xi$, i.e., there exists a constant $L > 0$ such that
\begin{align*}
\| \nabla f(w; \xi) - \nabla f(w'; \xi) \| \leq L \| w - w' \|, \ \forall w,w' \in \mathbb{R}^d. \tagthis \label{eq:Lsmooth_basic}
\end{align*}
}
\end{ass}
 Note that this assumption implies that $F(w) = \mathbb{E}[f(w; \xi)]$ is also \emph{L-smooth}. The following strong convexity assumption
 will be made for the appropriate parts of the analysis, otherwise, it would be dropped.  
\begin{ass}[$\mu$-strongly convex]
\label{ass_stronglyconvex}
\textit{The function $F: \mathbb{R}^d \to \mathbb{R}$, is $\mu$-strongly convex, i.e., there exists a constant $\mu > 0$ such that $\forall w,w' \in \mathbb{R}^d$, 
\begin{gather*}
F(w)   \geq  F(w') + \nabla F(w')^\top (w - w') + \tfrac{\mu}{2}\|w - w'\|^2.
\end{gather*}}
\end{ass}
Under Assumption \ref{ass_stronglyconvex}, let us define the (unique) optimal solution of \eqref{main_prob_empirical_risk} as $w_{*}$, 
Then strong convexity of $F$  implies that 
 \begin{equation}\label{eq:strongconvexity2}
 2\mu [ F(w) - F(w_{*})] \leq  \| \nabla F(w)\|^2, \ \forall w \in \mathbb{R}^d. 
 \end{equation}
 Under strong convexity assumption we will use $\kappa$ to denote  the condition number $\kappa\eqdef L/\mu$. 


Finally, as a special case of the strong convexity with $\mu=0$, we state the general convexity assumption, which we will use for some of the convergence results.
\begin{ass}[Convex]
\label{ass_convex}
\textit{$f(w;\xi)$ is convex for every realization of $\xi$, i.e., $\forall w,w' \in \mathbb{R}^d$, 
\begin{gather*}
f(w; \xi)    \geq f(w'; \xi) + \nabla f(w'; \xi)^\top (w - w'). 
\end{gather*}}
\end{ass}
We note that Assumption  \ref{ass_stronglyconvex} does not imply Assumption~\ref{ass_convex}, because the latter applies to all realizations, while the former applied only to the expectation.

Hence in our analysis, depending on the result we aim at, we will require Assumption~\ref{ass_convex} to hold by itself, or Assumption  \ref{ass_stronglyconvex}  and Assumption~\ref{ass_convex} to hold together.
 We will always use Assumption  \ref{ass_Lsmooth}.


We also assume that, the $\mathbb{E}[ \| \nabla f(w;\xi) \|^2$ at $w_*$ is bounded by some constant. 
\begin{ass}\label{ass_bounded_gradient_solution}
\textit{There exists some $\sigma_* > 0$ such that
\begin{align*}
    \mathbb{E}[ \| \nabla f(w_*;\xi) \|^2 \leq \sigma_*^2, \tagthis \label{eq_bounded_gradient_solution}
\end{align*}
where $w_{*}$ is any optimal solution of $F(w)$; and $\xi$ is some random variable.  }
\end{ass}

\subsection{Existing Results}

We provide some well-known results from the existing literature that support our theoretical analysis as follows. First, we start introducing two standard lemmas in smooth convex optimization (\cite{nesterov2004}) for a general function $f$. 
\begin{lem}[Theorem 2.1.5 in \cite{nesterov2004}]
\label{lem_tech_01}
\textit{Suppose that $f$ is convex and $L$-smooth. Then, for any $w$, $w' \in \mathbb{R}^d$, 
\begin{gather*}
f(w) \leq f(w') + \nabla f(w')^T(w-w') + \frac{L}{2}\|w-w'\|^2,
\tagthis\label{eq:Lsmooth}
\\
f(w) \geq f(w') + \nabla f(w')^\top (w - w') + \frac{1}{2L}\| \nabla f(w) - \nabla f(w') \|^2, 
\tagthis 
\\
(\nabla f(w) - \nabla f(w'))^\top (w - w') \geq \frac{1}{L}\| \nabla f(w) - \nabla f(w') \|^2. 
\tagthis\label{ineq_convex} 
\end{gather*}}
\end{lem}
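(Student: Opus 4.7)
The three inequalities are classical consequences of $L$-smoothness and convexity (Nesterov's Theorem 2.1.5), so the plan is a compact rederivation rather than any new argument.

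First I would establish the upper bound \eqref{eq:Lsmooth}, which uses only $L$-smoothness (not convexity). By the fundamental theorem of calculus along the segment joining $w'$ and $w$,
\begin{equation*}
f(w) - f(w') - \nabla f(w')^\top (w-w') \;=\; \int_0^1 \bigl(\nabla f(w' + t(w-w')) - \nabla f(w')\bigr)^\top (w-w')\, dt.
\end{equation*}
Applying Cauchy--Schwarz inside the integral and using $\|\nabla f(w' + t(w-w')) - \nabla f(w')\| \leq L t \|w-w'\|$ from \eqref{eq:Lsmooth_basic} yields the bound $\frac{L}{2}\|w-w'\|^2$.

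Next I would prove the quadratic lower bound (the second inequality). The standard trick is to linearize at $w'$: define
\begin{equation*}
\phi(u) \;=\; f(u) - \nabla f(w')^\top u.
\end{equation*}
Then $\phi$ is convex (since $f$ is) and $L$-smooth (same Hessian bound), and $\nabla \phi(w') = 0$, so $w'$ is the global minimizer of $\phi$. Applying the already-proved descent lemma \eqref{eq:Lsmooth} to $\phi$ at the point $u^{+} = w - \tfrac{1}{L}\nabla \phi(w)$ gives
\begin{equation*}
\phi(w') \;\leq\; \phi(u^{+}) \;\leq\; \phi(w) - \tfrac{1}{2L}\|\nabla \phi(w)\|^2.
\end{equation*}
Rearranging and substituting $\nabla \phi(w) = \nabla f(w) - \nabla f(w')$ and $\phi(w) - \phi(w') = f(w) - f(w') - \nabla f(w')^\top(w-w')$ gives exactly the desired inequality.

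Finally, the co-coercivity bound \eqref{ineq_convex} follows by applying the second inequality twice, once as written and once with $w$ and $w'$ swapped, and adding the two: the $f(w)-f(w')$ terms telescope, leaving $\frac{1}{L}\|\nabla f(w)-\nabla f(w')\|^2$ on the right and $(\nabla f(w)-\nabla f(w'))^\top(w-w')$ on the left. The only step requiring any real care is the middle one: recognizing that linearizing $f$ at $w'$ reduces the lower bound to the descent lemma applied at the auxiliary point $u^{+}$; everything else is routine manipulation.
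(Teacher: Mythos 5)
Your proposal is correct, and it reproduces the standard textbook argument: the paper itself gives no proof of this lemma, citing it directly as Theorem 2.1.5 of \cite{nesterov2004}, and your three steps (integral form of the descent lemma, linearization $\phi(u) = f(u) - \nabla f(w')^\top u$ minimized at $w'$ to get the quadratic lower bound, then symmetrization to obtain co-coercivity) are precisely the chain used in that reference. All steps check out, including the observation that \eqref{eq:Lsmooth} needs only smoothness while the other two need convexity, so there is nothing to correct or compare beyond noting the paper delegates this entirely to the citation.
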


Note that \eqref{eq:Lsmooth} does not require the convexity of $f$. 

\begin{lem}[Theorem 2.1.11 in \cite{nesterov2004}]
\label{lem_convex_lowerbound}
\textit{Suppose that $f$ is $\mu$-strongly convex and $L$-smooth. Then, for any $w$, $w' \in \mathbb{R}^d$, 
\begin{align*}
&(\nabla f(w) - \nabla f(w'))^\top (w - w') \geq \frac{\mu L}{\mu + L} \|w - w'\|^2  
 + \frac{1}{\mu + L}\| \nabla f(w) - \nabla f(w') \|^2. \tagthis\label{eqdasfsadfsa} 
\end{align*}}
\end{lem}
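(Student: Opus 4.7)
The plan is to reduce the claim to the co-coercivity estimate \eqref{ineq_convex} of Lemma~\ref{lem_tech_01} by stripping the strong-convexity component of $f$. Concretely, I would introduce the auxiliary function
\begin{equation*}
\phi(w) \eqdef f(w) - \tfrac{\mu}{2}\|w\|^2.
\end{equation*}
Since $f$ is $\mu$-strongly convex, $\phi$ is convex; since $f$ is $L$-smooth and $\tfrac{\mu}{2}\|\cdot\|^2$ is $\mu$-smooth (with gradient $\mu w$), the function $\phi$ is $(L-\mu)$-smooth. This puts $\phi$ exactly in the setting where \eqref{ineq_convex} applies, with Lipschitz constant $L-\mu$. (The degenerate case $L=\mu$ forces $f$ to be quadratic with Hessian $\mu I$, in which case $\nabla f(w)-\nabla f(w') = \mu(w-w')$ and the claimed inequality is an identity; I would treat it separately.)

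Applying \eqref{ineq_convex} to $\phi$ yields
\begin{equation*}
\bigl(\nabla \phi(w) - \nabla \phi(w')\bigr)^\top(w - w') \geq \frac{1}{L-\mu}\bigl\|\nabla \phi(w) - \nabla \phi(w')\bigr\|^2.
\end{equation*}
Substituting $\nabla \phi(u) = \nabla f(u) - \mu u$ and abbreviating
$A \eqdef (\nabla f(w) - \nabla f(w'))^\top(w-w')$,
$B \eqdef \|\nabla f(w) - \nabla f(w')\|^2$, and
$C \eqdef \|w - w'\|^2$, the left-hand side becomes $A - \mu C$ while the right-hand side expands, after squaring, to $\frac{1}{L-\mu}(B - 2\mu A + \mu^2 C)$. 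Thus
\begin{equation*}
A - \mu C \;\geq\; \frac{1}{L-\mu}\bigl(B - 2\mu A + \mu^2 C\bigr).
\end{equation*}

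The remaining step is purely algebraic: multiply through by $L-\mu>0$, move the $A$-terms to one side and the $C$-terms to the other, and observe that
\begin{equation*}
(L-\mu)A + 2\mu A = (L+\mu)A, \qquad (L-\mu)\mu C + \mu^2 C = \mu L C.
\end{equation*}
This collapses the inequality to $(L+\mu) A \geq B + \mu L C$, which upon dividing by $L+\mu$ gives exactly the claim. The only real obstacle is this bookkeeping plus handling the boundary case $L=\mu$; there is no conceptual difficulty, since all the analytic content is already carried by the co-coercivity bound \eqref{ineq_convex}.
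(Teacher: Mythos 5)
Your argument is correct, and it is worth noting that the paper itself gives no proof of this lemma at all: it is quoted as Theorem 2.1.11 of \cite{nesterov2004}, so the only meaningful comparison is with the textbook's proof, which uses exactly your reduction (subtract the strongly convex quadratic, apply co-coercivity to the convex residual, expand). Your bookkeeping checks out: with your $A$, $B$, $C$, co-coercivity of $\phi$ with constant $L-\mu$ gives $A - \mu C \geq \frac{1}{L-\mu}\left(B - 2\mu A + \mu^2 C\right)$, which rearranges to $(L+\mu)A \geq B + \mu L C$, i.e.\ \eqref{eqdasfsadfsa}; your dispatch of the degenerate case $L=\mu$ (where $f$ is forced to be the quadratic $\frac{\mu}{2}\|w\|^2$ plus an affine term, and the inequality holds with equality) is also correct.

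The one step you should tighten is the justification that $\phi$ is $(L-\mu)$-smooth ``since $f$ is $L$-smooth and $\frac{\mu}{2}\|\cdot\|^2$ is $\mu$-smooth.'' Smoothness constants do not subtract: the triangle inequality applied to $\nabla\phi(w) = \nabla f(w) - \mu w$ only yields the constant $L+\mu$, and for differences of smooth functions without convexity the constant $L-\mu$ is simply false. What makes $L-\mu$ correct here is the convexity of $\phi$: the descent lemma \eqref{eq:Lsmooth} for $f$, combined with the identity $\frac{\mu}{2}\|w\|^2 = \frac{\mu}{2}\|w'\|^2 + \mu (w')^\top(w-w') + \frac{\mu}{2}\|w-w'\|^2$, shows that $\phi(w) \leq \phi(w') + \nabla\phi(w')^\top(w-w') + \frac{L-\mu}{2}\|w-w'\|^2$, and for a \emph{convex} function this quadratic upper bound is equivalent to Lipschitz continuity of the gradient with the same constant and, in particular, implies the co-coercivity inequality \eqref{ineq_convex} that you invoke (these equivalences are the content of Theorem 2.1.5 in \cite{nesterov2004}). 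With that justification repaired, your proof is complete and coincides with the argument in the cited reference.
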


The following existing results are more specific properties of component functions $f(w;\xi)$. 

\begin{lem}[\cite{SVRG}]\label{lem_bound_diff_grad}
\textit{Suppose that Assumptions \ref{ass_Lsmooth} and \ref{ass_convex} hold. Then, $\forall w \in \mathbb{R}^d$, 
\begin{align*}
\mathbb{E} [ \| \nabla f(w;\xi) - \nabla f(w_{*};\xi) \|^2 ]  \leq 2L [ F(w) - F(w_{*})], \tagthis \label{eq_bound_diff_grad}
\end{align*}
where $w_{*}$ is any optimal solution of $F(w)$.}
\end{lem}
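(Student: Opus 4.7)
The plan is to reduce the inequality to a pointwise (in $\xi$) inequality and then take expectations. Since both Assumption~\ref{ass_Lsmooth} and Assumption~\ref{ass_convex} hold for every realization of $\xi$, I can apply $L$-smoothness and convexity to $f(\cdot;\xi)$ itself, not just to $F$.

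First, for each fixed $\xi$, I would introduce the auxiliary function
\begin{equation*}
g_\xi(w) \;\eqdef\; f(w;\xi) - f(w_*;\xi) - \nabla f(w_*;\xi)^\top (w - w_*).
\end{equation*}
The function $g_\xi$ is the convex $L$-smooth function $f(\cdot;\xi)$ minus its linearization at $w_*$, so $g_\xi$ is itself convex and $L$-smooth, $\nabla g_\xi(w) = \nabla f(w;\xi) - \nabla f(w_*;\xi)$, and $\nabla g_\xi(w_*) = 0$. By convexity this forces $w_*$ to be a global minimizer of $g_\xi$ with $g_\xi(w_*) = 0$, so $g_\xi(w) \geq 0$ for all $w$.

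Next I would apply the $L$-smoothness inequality \eqref{eq:Lsmooth} to $g_\xi$ at the point $w$ with displacement $-\frac{1}{L}\nabla g_\xi(w)$, yielding
\begin{equation*}
g_\xi\!\left(w - \tfrac{1}{L}\nabla g_\xi(w)\right) \;\leq\; g_\xi(w) - \tfrac{1}{2L}\|\nabla g_\xi(w)\|^2.
\end{equation*}
Since the left-hand side is at least $g_\xi(w_*) = 0$, rearranging gives the pointwise bound
\begin{equation*}
\|\nabla f(w;\xi) - \nabla f(w_*;\xi)\|^2 \;\leq\; 2L\bigl[f(w;\xi) - f(w_*;\xi) - \nabla f(w_*;\xi)^\top (w - w_*)\bigr].
\end{equation*}

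Finally, I would take expectations over $\xi$ on both sides. Using linearity and the unbiasedness hypothesis $\mathbb{E}[\nabla f(w_*;\xi)] = \nabla F(w_*)$, together with the first-order optimality condition $\nabla F(w_*) = 0$ (which holds because $F$ is $L$-smooth and $w_*$ minimizes $F$), the linear correction term vanishes and the right-hand side becomes $2L[F(w) - F(w_*)]$, giving \eqref{eq_bound_diff_grad}. There is no real obstacle here; the only conceptual step is recognizing that subtracting the tangent plane of $f(\cdot;\xi)$ at $w_*$ yields a nonnegative function for which $w_*$ is a minimizer, so that the standard ``gradient-norm-squared bounded by $2L$ times suboptimality'' inequality applies per realization before averaging.
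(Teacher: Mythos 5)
Your proof is correct and is essentially the standard argument for this result: the paper itself does not reproduce a proof (it cites \cite{SVRG}), and your construction of $g_\xi$ as $f(\cdot;\xi)$ minus its linearization at $w_*$, followed by the one-step descent bound and taking expectations with $\nabla F(w_*)=0$, is exactly the proof given in that reference. As a side remark, the pointwise inequality you derive is also available directly from the second inequality of Lemma~\ref{lem_tech_01} applied to $f(\cdot;\xi)$ with $w'=w_*$, so the expectation step alone would finish the argument.
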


\begin{lem}[Lemma 1 in \cite{Nguyen2018sgd_hogwild}] \label{lem_bounded_secondmoment_04}
\textit{Suppose that Assumptions \ref{ass_Lsmooth} and \ref{ass_convex} hold. Then, for $\forall w \in \mathbb{R}^d$, 
\begin{gather*}
\mathbb{E} [ \|\nabla f(w;\xi)\|^2 ] \leq  4 L [ F(w) - F(w_{*}) ] + 2 \mathbb{E} [ \|\nabla f(w_{*};\xi)\|^2 ], \tagthis \label{eq_bounded_secondmoment_04} 
\end{gather*}
where $w_{*}$ is any optimal solution of $F(w)$.}
\end{lem}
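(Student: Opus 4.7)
The plan is to reduce this directly to Lemma \ref{lem_bound_diff_grad}, which already bounds $\mathbb{E}[\|\nabla f(w;\xi) - \nabla f(w_*;\xi)\|^2]$ by $2L[F(w) - F(w_*)]$ under Assumptions \ref{ass_Lsmooth} and \ref{ass_convex}. The only extra ingredient needed is a standard decomposition of $\nabla f(w;\xi)$ around the anchor point $w_*$.

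First, I would write $\nabla f(w;\xi) = [\nabla f(w;\xi) - \nabla f(w_*;\xi)] + \nabla f(w_*;\xi)$ and apply the elementary inequality $\|a+b\|^2 \leq 2\|a\|^2 + 2\|b\|^2$ pointwise in $\xi$, obtaining
\begin{equation*}
\|\nabla f(w;\xi)\|^2 \leq 2\|\nabla f(w;\xi) - \nabla f(w_*;\xi)\|^2 + 2\|\nabla f(w_*;\xi)\|^2.
\end{equation*}
Taking expectations over $\xi$ on both sides preserves the inequality by linearity and monotonicity of expectation.

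Next, I would invoke Lemma \ref{lem_bound_diff_grad} to replace $\mathbb{E}[\|\nabla f(w;\xi) - \nabla f(w_*;\xi)\|^2]$ with $2L[F(w) - F(w_*)]$, which requires exactly the assumptions in force here. Combining gives the claimed bound
\begin{equation*}
\mathbb{E}[\|\nabla f(w;\xi)\|^2] \leq 4L[F(w) - F(w_*)] + 2\mathbb{E}[\|\nabla f(w_*;\xi)\|^2].
\end{equation*}

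There is no real obstacle in this proof; the entire argument is a two-line application of the $(a+b)^2$ inequality followed by Lemma \ref{lem_bound_diff_grad}. The only point worth flagging is that the convexity of each realization $f(\,\cdot\,;\xi)$ (Assumption \ref{ass_convex}), rather than merely convexity of $F$, is what makes Lemma \ref{lem_bound_diff_grad} available, since the co-coercivity style bound used there must hold in expectation via a per-realization inequality.
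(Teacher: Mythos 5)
Your proof is correct: the paper itself does not prove this lemma (it simply cites Lemma 1 of \cite{Nguyen2018sgd_hogwild}), and your argument --- the decomposition $\nabla f(w;\xi) = [\nabla f(w;\xi) - \nabla f(w_*;\xi)] + \nabla f(w_*;\xi)$, the inequality $\|a+b\|^2 \leq 2\|a\|^2 + 2\|b\|^2$, and then Lemma \ref{lem_bound_diff_grad} --- is precisely the standard proof given in that cited reference. Your closing remark is also on point: it is the per-realization convexity (Assumption \ref{ass_convex}) that makes Lemma \ref{lem_bound_diff_grad} applicable, so nothing is missing.
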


\begin{lem}[Lemma 1 in \cite{Nguyen2018sgd_dnn}]\label{lem_bound_w0}
\textit{Let $\xi$ and $\{\xi_i\}_{i=1}^b$ be i.i.d. random variables with $\mathbb{E}[\nabla f(w;\xi_i )] = \nabla F(w)$, $i = 1,\dots,b$, for all $w \in \mathbb{R}^d$. Then, 
\begin{align*}
\mathbb{E} \left[ \left\| \frac{1}{b} \sum_{i=1}^b \nabla f(w; \xi_i)  - \nabla F(w) \right\|^2  \right] = \frac{\mathbb{E} [ \| \nabla f(w; \xi) \|^2 ] - \| \nabla F(w) \|^2 }{b}. \tagthis \label{eq_prob_bound_w0}
\end{align*}}
\end{lem}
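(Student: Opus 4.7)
The plan is to reduce the statement to the standard fact that the variance of an empirical average of i.i.d.\ mean-zero vectors equals $\frac{1}{b}$ times the single-sample variance, and then expand the single-sample variance to get the stated form.

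First I would introduce centered random vectors $X_i := \nabla f(w;\xi_i) - \nabla F(w)$ for $i=1,\dots,b$. Since $\{\xi_i\}_{i=1}^b$ are i.i.d.\ and $\nabla F(w) = \mathbb{E}[\nabla f(w;\xi_i)]$, the $X_i$ are i.i.d.\ with $\mathbb{E}[X_i] = 0$. The quantity under the norm on the left-hand side of \eqref{eq_prob_bound_w0} is then exactly $\frac{1}{b}\sum_{i=1}^b X_i$, so the left-hand side equals $\frac{1}{b^2}\,\mathbb{E}\bigl[\bigl\|\sum_{i=1}^b X_i\bigr\|^2\bigr]$.

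Next I would expand the squared norm as a double sum of inner products and separate the diagonal from the off-diagonal terms:
\begin{align*}
\mathbb{E}\Bigl[\Bigl\|\sum_{i=1}^b X_i\Bigr\|^2\Bigr]
= \sum_{i=1}^b \mathbb{E}[\|X_i\|^2] + \sum_{i \neq j} \mathbb{E}[\langle X_i, X_j\rangle].
\end{align*}
For $i \neq j$, independence of $X_i$ and $X_j$ gives $\mathbb{E}[\langle X_i,X_j\rangle] = \langle \mathbb{E}[X_i], \mathbb{E}[X_j]\rangle = 0$, so only the diagonal sum survives. Since the $X_i$ are identically distributed, this simplifies to $b\,\mathbb{E}[\|X_1\|^2]$, and dividing by $b^2$ yields $\frac{1}{b}\mathbb{E}[\|X_1\|^2]$.

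Finally I would expand the single-sample variance. Using $\mathbb{E}[\nabla f(w;\xi_1)] = \nabla F(w)$,
\begin{align*}
\mathbb{E}[\|X_1\|^2]
&= \mathbb{E}\bigl[\|\nabla f(w;\xi_1)\|^2\bigr] - 2\,\bigl\langle \mathbb{E}[\nabla f(w;\xi_1)],\,\nabla F(w)\bigr\rangle + \|\nabla F(w)\|^2 \\
&= \mathbb{E}\bigl[\|\nabla f(w;\xi)\|^2\bigr] - \|\nabla F(w)\|^2,
\end{align*}
which matches the right-hand side of \eqref{eq_prob_bound_w0} after dividing by $b$. There is no real obstacle here; the only subtlety is making explicit the use of independence (to kill cross terms) and the i.i.d.\ assumption (to collapse the sum of identical variances), both of which are guaranteed by the lemma's hypotheses.
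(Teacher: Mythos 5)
Your proof is correct. Note that the paper itself contains no proof of this lemma: it is quoted as Lemma 1 of \cite{Nguyen2018sgd_dnn} and the paper simply states that the proof can be found there, so there is no in-paper argument to compare against. Your derivation—centering the gradients as $X_i = \nabla f(w;\xi_i) - \nabla F(w)$, expanding $\bigl\|\sum_i X_i\bigr\|^2$ into diagonal and cross terms, killing the cross terms via independence of the mean-zero $X_i$, and then expanding the single-sample variance using $\mathbb{E}[\nabla f(w;\xi_1)] = \nabla F(w)$ and the fact that $\xi_1$ and $\xi$ are identically distributed—is the standard sample-mean variance computation, and it supplies a complete, self-contained justification of exactly the kind the cited reference gives. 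Every step is sound; in particular you correctly isolate the two places where the hypotheses are used (independence for the off-diagonal terms, identical distribution to collapse the diagonal sum and to replace $\xi_1$ by $\xi$).
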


The proof of this Lemma is in \cite{Nguyen2018sgd_dnn}.

Lemmas \ref{lem_bounded_secondmoment_04} and \ref{lem_bound_w0} clearly imply the following result. 
\begin{cor}\label{cor_two_lemmas}
\textit{Suppose that Assumptions \ref{ass_Lsmooth} and \ref{ass_convex} hold. Let $\xi$ and $\{\xi_i\}_{i=1}^b$ be i.i.d. random variables with $\mathbb{E}[\nabla f(w;\xi_i )] = \nabla F(w)$, $i = 1,\dots,b$, for all $w \in \mathbb{R}^d$. Then, 
\begin{align*}
& \mathbb{E} \left[ \left\| \frac{1}{b} \sum_{i=1}^b \nabla f(w; \xi_i)  - \nabla F(w) \right\|^2  \right] \\ & \qquad \qquad \leq \frac{4 L [ F(w) - F(w_{*}) ] + 2 \mathbb{E} [ \|\nabla f(w_{*};\xi)\|^2 ] - \| \nabla F(w) \|^2 }{b}, \tagthis \label{eq_prob_bound_w0_02}
\end{align*}
where $w_{*}$ is any optimal solution of $F(w)$. }
\end{cor}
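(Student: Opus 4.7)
The plan is to chain the two preceding results together in the most direct way. First I would invoke Lemma \ref{lem_bound_w0}, which gives the exact identity
\begin{align*}
\mathbb{E}\left[\left\|\tfrac{1}{b}\sum_{i=1}^b \nabla f(w;\xi_i) - \nabla F(w)\right\|^2\right] = \frac{\mathbb{E}[\|\nabla f(w;\xi)\|^2] - \|\nabla F(w)\|^2}{b}.
\end{align*}
This identity holds purely from the i.i.d.\ assumption and unbiasedness, and requires neither smoothness nor convexity of $f$.

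Next I would apply Lemma \ref{lem_bounded_secondmoment_04} to the numerator, replacing $\mathbb{E}[\|\nabla f(w;\xi)\|^2]$ by its upper bound
\begin{align*}
\mathbb{E}[\|\nabla f(w;\xi)\|^2] \leq 4L[F(w) - F(w_*)] + 2\mathbb{E}[\|\nabla f(w_*;\xi)\|^2].
\end{align*}
This step is where the hypotheses of Assumptions \ref{ass_Lsmooth} and \ref{ass_convex} are consumed. Substituting this bound into the numerator of the previous display, while preserving the $-\|\nabla F(w)\|^2$ term (which remains non-positive and is kept for tightness in later applications), yields exactly \eqref{eq_prob_bound_w0_02}.

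There is no real obstacle here: the corollary is just a mechanical composition of the two lemmas. The only thing worth being careful about is that the two lemmas must be compatible in their use of $\xi$ and $\{\xi_i\}$ as i.i.d.\ copies, so that $\mathbb{E}[\|\nabla f(w;\xi_i)\|^2] = \mathbb{E}[\|\nabla f(w;\xi)\|^2]$ for each $i$, which justifies identifying the second-moment term appearing in Lemma \ref{lem_bound_w0} with the quantity bounded by Lemma \ref{lem_bounded_secondmoment_04}. Beyond this bookkeeping, the proof is a one-line substitution.
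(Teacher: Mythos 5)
Your proof is correct and is exactly the argument the paper intends: the paper simply states that Lemmas \ref{lem_bounded_secondmoment_04} and \ref{lem_bound_w0} ``clearly imply'' the corollary, and your chaining---the variance identity of Lemma \ref{lem_bound_w0} followed by the second-moment bound of Lemma \ref{lem_bounded_secondmoment_04} applied to the numerator---is precisely that composition.
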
 

Based on the above lemmas, we will show in detail how to achieve our main results in the following subsections.   

\subsection{Special Property of SARAH Update}\label{sec_sarah_update}

The most important property of the SVRG algorithm is  the variance reduction of the steps. This property holds as the number of outer iteration grows, but it does not hold, if only the number of inner iterations increases.  In other words, if we simply run the inner loop for many iterations (without executing additional outer loops), the variance of the steps does not reduce in the case of SVRG, while it goes to zero in the case of SARAH with large learning rate in the strongly convex case. We recall the SARAH update as follows. 
\begin{equation}\label{eq:vt_update}
   v_{t} = \nabla f (w_{t}; \xi_t) - \nabla f(w_{t-1}; \xi_t) + v_{t-1},
\end{equation}
followed by the iterate update:
\begin{equation}
w_{t+1} = w_{t} - \eta v_{t}.
\end{equation}

We will now show that $\|v_t\|^2$ is going to zero in expectation in the \textit{strongly convex} case. These results substantiate our conclusion that SARAH uses more stable stochastic gradient estimates than SVRG. 

\begin{prop}\label{thm_bounded_moment_stronglyconvexP}
\textit{Suppose that Assumptions \ref{ass_Lsmooth}, \ref{ass_stronglyconvex} and \ref{ass_convex} hold. Consider $v_{t}$ defined by \eqref{eq:vt_update} with $\eta < 2/L$ and any given $v_0$. Then, for any $t\geq 1$,
\begin{align*}
\mathbb{E}[\|v_{t}\|^2]
 &\leq \left[ 1 - \left(\tfrac{2}{\eta L} - 1 \right) \mu^2 \eta^2  \right] \mathbb{E}[\|v_{t-1}\|^2]
 \\
 &\leq \left[ 1 - \left(\tfrac{2}{\eta L} - 1 \right) \mu^2 \eta^2  \right]^{t} \| v_0 \|^2.
\end{align*}}
\end{prop}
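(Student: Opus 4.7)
The plan is to derive a one-step contraction on $\mathbb{E}[\|v_t\|^2 \mid \mathcal{F}_t]$ and then iterate it back to $v_0$. First I would expand
\[
\|v_t\|^2 = \|v_{t-1}\|^2 + 2\,v_{t-1}^\top(v_t - v_{t-1}) + \|v_t - v_{t-1}\|^2,
\]
where $v_t - v_{t-1} = \nabla f(w_t;\xi_t) - \nabla f(w_{t-1};\xi_t)$. Since $v_{t-1}, w_{t-1}, w_t$ are all $\mathcal{F}_t$-measurable while $\xi_t$ is independent of $\mathcal{F}_t$, applying $\mathbb{E}[\nabla f(w;\xi_t)\mid\mathcal{F}_t] = \nabla F(w)$ for $w \in \{w_{t-1},w_t\}$ yields
\[
\mathbb{E}[\|v_t\|^2 \mid \mathcal{F}_t] = \|v_{t-1}\|^2 + 2\,v_{t-1}^\top\bigl(\nabla F(w_t) - \nabla F(w_{t-1})\bigr) + \mathbb{E}\bigl[\|\nabla f(w_t;\xi_t) - \nabla f(w_{t-1};\xi_t)\|^2 \mid \mathcal{F}_t\bigr].
\]

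Next I would control the squared-difference term using the co-coercivity inequality \eqref{ineq_convex} of Lemma \ref{lem_tech_01}, applied pointwise to $f(\cdot;\xi_t)$ (convex and $L$-smooth by Assumptions \ref{ass_Lsmooth} and \ref{ass_convex}):
\[
\|\nabla f(w_t;\xi_t) - \nabla f(w_{t-1};\xi_t)\|^2 \leq L\bigl(\nabla f(w_t;\xi_t) - \nabla f(w_{t-1};\xi_t)\bigr)^\top(w_t - w_{t-1}).
\]
The conditional expectation of the right side is $L\bigl(\nabla F(w_t) - \nabla F(w_{t-1})\bigr)^\top(w_t - w_{t-1})$, and the SARAH update rule $w_t - w_{t-1} = -\eta v_{t-1}$ lets me rewrite this as $-L\eta\,v_{t-1}^\top\bigl(\nabla F(w_t) - \nabla F(w_{t-1})\bigr)$. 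Substituting back collapses the expansion to
\[
\mathbb{E}[\|v_t\|^2 \mid \mathcal{F}_t] \leq \|v_{t-1}\|^2 + (2 - L\eta)\,v_{t-1}^\top\bigl(\nabla F(w_t) - \nabla F(w_{t-1})\bigr),
\]
and the hypothesis $\eta < 2/L$ guarantees that $2 - L\eta > 0$, so I now need an upper bound on this inner product.

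The main obstacle is that strong convexity pairs $\nabla F(w_t) - \nabla F(w_{t-1})$ with $w_t - w_{t-1}$, not with $v_{t-1}$. The key trick is again the update rule: writing $v_{t-1} = -(w_t - w_{t-1})/\eta$ and using $\mu$-strong convexity of $F$ (Assumption \ref{ass_stronglyconvex}) together with $\|w_t - w_{t-1}\|^2 = \eta^2\|v_{t-1}\|^2$ gives
\[
v_{t-1}^\top\bigl(\nabla F(w_t) - \nabla F(w_{t-1})\bigr) = -\tfrac{1}{\eta}\,(w_t - w_{t-1})^\top\bigl(\nabla F(w_t) - \nabla F(w_{t-1})\bigr) \leq -\mu\eta\|v_{t-1}\|^2.
\]
Combining the two bounds yields $\mathbb{E}[\|v_t\|^2 \mid \mathcal{F}_t] \leq [1 - \mu\eta(2 - L\eta)]\|v_{t-1}\|^2$; rewriting the factor as $1 - \mu L\eta^2\bigl(\tfrac{2}{\eta L} - 1\bigr)$ and then using $\mu \leq L$ to replace one $L$ by $\mu$ produces the stated coefficient $1 - \bigl(\tfrac{2}{\eta L} - 1\bigr)\mu^2\eta^2$. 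Taking total expectation gives the one-step recursion; iterating it from $t$ down to $1$ delivers the geometric bound $\bigl[1 - \bigl(\tfrac{2}{\eta L} - 1\bigr)\mu^2\eta^2\bigr]^t \|v_0\|^2$, as claimed.
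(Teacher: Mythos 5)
Your proof is correct, but it follows a genuinely different route from the one the paper relies on. The paper in fact gives no direct proof of this proposition: it defers to Theorem 1a of \cite{nguyen2017sarah}. That argument first applies the co-coercivity bound \eqref{ineq_convex} so that the whole expansion collapses onto the increment $v_t - v_{t-1}$,
\begin{align*}
\mathbb{E}[\|v_{t}\|^2 \mid \mathcal{F}_{t}] \;\leq\; \|v_{t-1}\|^2 - \left(\tfrac{2}{\eta L} - 1\right)\mathbb{E}[\|v_{t} - v_{t-1}\|^2 \mid \mathcal{F}_{t}]
\end{align*}
(the same grouping reused in the proof of Lemma \ref{lem_bound_var_diff_str_02} here), and then lower-bounds that increment via Jensen's inequality and the expansiveness of the gradient of a strongly convex function: $\mathbb{E}[\|v_{t} - v_{t-1}\|^2 \mid \mathcal{F}_{t}] \geq \|\nabla F(w_{t}) - \nabla F(w_{t-1})\|^2 \geq \mu^2\eta^2\|v_{t-1}\|^2$. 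You instead keep the cross term $v_{t-1}^\top(\nabla F(w_{t}) - \nabla F(w_{t-1}))$ explicit, use co-coercivity to convert the quadratic term into a multiple of that same inner product, and then invoke strong monotonicity of $\nabla F$ directly, with no appeal to Jensen. Both routes use Assumption \ref{ass_convex} (co-coercivity of each realization) and Assumption \ref{ass_stronglyconvex} (contraction from strong convexity of $F$) in the same roles, but yours buys a strictly sharper intermediate factor, $1 - \left(\tfrac{2}{\eta L} - 1\right)\mu L \eta^2$ — a rate of $1 - 1/\kappa$ at $\eta = 1/L$, versus the $1 - 1/\kappa^2$ noted after the proposition — which you then deliberately relax through $\mu L \geq \mu^2$ to match the stated constant; the cited proof lands on $\mu^2$ immediately because Jensen-plus-expansiveness squares the modulus $\mu$. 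One step both you and the cited proof leave implicit: iterating the one-step recursion requires the bracketed factor to be nonnegative, which does hold since $\left(\tfrac{2}{\eta L} - 1\right)\mu^2\eta^2 = \tfrac{\mu^2}{L^2}\,\eta L\,(2 - \eta L) \leq \tfrac{\mu^2}{L^2} \leq 1$ for $\eta < 2/L$.
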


The proof of this Proposition can be derived directly from Theorem 1a in \cite{nguyen2017sarah}. This result implies that by choosing $\eta=\Ocal(1/L)$, we obtain the linear  convergence of $\|v_t\|^2$ in expectation with the rate $(1-1/\kappa^2)$. 

We will provide our convergence analysis in detail in next sub-section. We will divide our results into two parts:
the \textit{one-loop} results corresponding to iSARAH-IN (Algorithm \ref{isarah_in}) and the \textit{multiple-loop} results corresponding to iSARAH (Algorithm \ref{isarah}). 

\subsection{One-loop (iSARAH-IN) Results}

We begin with providing two useful lemmas that do not require convexity assumption. Lemma \ref{lem_main_derivation} bounds the sum of expected values of $\| \nabla F(w_{t})\|^2$; and Lemma \ref{lem:var_diff_01} expands the value of $\mathbb{E}[\| \nabla F(w_{t}) - v_{t} \|^2]$. 
\begin{lem}\label{lem_main_derivation}
\textit{Suppose that Assumption \ref{ass_Lsmooth} holds. Consider iSARAH-IN (Algorithm \ref{isarah_in}). Then, we have 
\begin{align*}
 \sum_{t=0}^{m} \mathbb{E}[ \| \nabla F(w_{t})\|^2 ] & \leq \frac{2}{\eta} \mathbb{E}[ F(w_{0}) - F(w_{*})] \\ & \qquad + \sum_{t=0}^{m} \mathbb{E}[ \| \nabla F(w_{t}) - v_{t} \|^2 ]  
 - ( 1 - L\eta ) \sum_{t=0}^{m} \mathbb{E} [ \| v_{t} \|^2 ], \tagthis \label{eq:001} 
\end{align*}
where $w_{*} = \arg \min_{w} F(w)$. }
\end{lem}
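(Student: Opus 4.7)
The plan is a standard descent-lemma-plus-telescoping argument, modified to accommodate the fact that $v_t$ is a biased gradient estimator (so I will avoid any argument that relies on $\mathbb{E}[v_t\mid\mathcal F_t] = \nabla F(w_t)$, which does not hold here).

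First, I would apply the $L$-smoothness bound \eqref{eq:Lsmooth} of Lemma \ref{lem_tech_01} to $F$ (which is itself $L$-smooth under Assumption \ref{ass_Lsmooth}), evaluated at $w_{t+1}$ and $w_t$. Since $w_{t+1} - w_t = -\eta v_t$ by the iterate update, this yields
\begin{equation*}
F(w_{t+1}) \le F(w_t) - \eta\,\nabla F(w_t)^\top v_t + \tfrac{L\eta^2}{2}\|v_t\|^2.
\end{equation*}

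Next, I would rewrite the cross term using the polarization identity $-2a^\top b = \|a-b\|^2 - \|a\|^2 - \|b\|^2$ with $a = \nabla F(w_t)$ and $b = v_t$. This gives
\begin{equation*}
-\eta\,\nabla F(w_t)^\top v_t = \tfrac{\eta}{2}\bigl(\|\nabla F(w_t) - v_t\|^2 - \|\nabla F(w_t)\|^2 - \|v_t\|^2\bigr),
\end{equation*}
and substituting into the previous display and rearranging produces the per-step inequality
\begin{equation*}
\tfrac{\eta}{2}\|\nabla F(w_t)\|^2 \le F(w_t) - F(w_{t+1}) + \tfrac{\eta}{2}\|\nabla F(w_t) - v_t\|^2 - \tfrac{\eta}{2}(1 - L\eta)\|v_t\|^2.
\end{equation*}
This is the key inequality; it is noteworthy that it requires no convexity, only smoothness, which matches the hypothesis of the lemma.

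Finally, I would sum this bound over $t = 0, 1, \dots, m$. The function-value differences telescope to $F(w_0) - F(w_{m+1})$, and since $w_*$ is a minimizer we have $F(w_{m+1}) \ge F(w_*)$, so $F(w_0) - F(w_{m+1}) \le F(w_0) - F(w_*)$. Taking expectations throughout and multiplying by $2/\eta$ yields exactly \eqref{eq:001}. There is no real obstacle here; the only things to be careful about are the choice of the polarization identity (so that the $\|\nabla F(w_t) - v_t\|^2$ term appears with a $+$ sign and no convexity is invoked) and the direction of the telescoping inequality that lets me replace $F(w_{m+1})$ by $F(w_*)$.
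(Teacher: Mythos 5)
Your proposal is correct and follows essentially the same route as the paper's own proof: the descent inequality from $L$-smoothness of $F$, the polarization identity applied to the cross term $\nabla F(w_t)^\top v_t$, and telescoping over $t=0,\dots,m$ with $F(w_{m+1}) \geq F(w_*)$ since $w_*$ is a minimizer. The only cosmetic difference is that you take expectations at the end rather than at each step, which changes nothing.
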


\begin{proof}
By Assumption \ref{ass_Lsmooth} and $w_{t+1} = w_{t} - \eta v_{t}$, we have
\begin{align*}
\mathbb{E}[ F(w_{t+1})] & \overset{\eqref{eq:Lsmooth}}{\leq}  \mathbb{E}[ F(w_{t})] - \eta \mathbb{E}[\nabla F(w_{t})^\top v_{t}] 
+ \frac{L\eta^2}{2} \mathbb{E} [ \| v_{t} \|^2 ] 
\\
& = \mathbb{E}[ F(w_{t})] - \frac{\eta}{2} \mathbb{E}[ \| \nabla F(w_{t})\|^2 ] 
+ \frac{\eta}{2} \mathbb{E}[ \| \nabla F(w_{t}) - v_{t} \|^2 ] 
\\ & \qquad - \left( \frac{\eta}{2} - \frac{L\eta^2}{2} \right) \mathbb{E} [ \| v_{t} \|^2 ],
\end{align*}
where the last equality follows from the fact
$a_1^T a_2 = \frac{1}{2}\left[\|a_1\|^2 + \|a_2\|^2 - \|a_1 - a_2\|^2\right].$

By summing over $t = 0,\dots,m$, we have
\begin{align*}
\mathbb{E}[ F(w_{m+1})] & \leq  \mathbb{E}[ F(w_{0})] - \frac{\eta}{2} \sum_{t=0}^{m} \mathbb{E}[ \| \nabla F(w_{t})\|^2 ] + \frac{\eta}{2} \sum_{t=0}^{m} \mathbb{E}[ \| \nabla F(w_{t}) - v_{t} \|^2 ] \\ & \qquad 
- \left( \frac{\eta}{2} - \frac{L\eta^2}{2} \right) \sum_{t=0}^{m} \mathbb{E} [ \| v_{t} \|^2 ],  
\end{align*}
which is equivalent to ($\eta>0$):
\allowdisplaybreaks
\begin{align*}
\sum_{t=0}^{m} \mathbb{E}[ \| \nabla F(w_{t})\|^2 ]  & \leq \frac{2}{\eta} \mathbb{E}[ F(w_{0}) - F(w_{m})] \\ & \qquad + \sum_{t=0}^{m} \mathbb{E}[ \| \nabla F(w_{t}) - v_{t} \|^2 ]  
 - ( 1 - L\eta ) \sum_{t=0}^{m} \mathbb{E} [ \| v_{t} \|^2 ] \\
& \leq \frac{2}{\eta} \mathbb{E}[ F(w_{0}) - F(w_{*})] \\ & \qquad + \sum_{t=0}^{m} \mathbb{E}[ \| \nabla F(w_{t}) - v_{t} \|^2 ]  
 - ( 1 - L\eta ) \sum_{t=0}^{m} \mathbb{E} [ \| v_{t} \|^2 ],    
\end{align*}

where the second inequality follows since $w_{*} = \arg \min_{w} F(w)$.
\end{proof}

\begin{lem}\label{lem:var_diff_01}
\textit{Suppose that Assumption \ref{ass_Lsmooth} holds. Consider $v_{t}$ defined by \eqref{eq:vt} in iSARAH-IN (Algorithm \ref{isarah_in}). Then for any $t\geq 1$, 
\begin{align*}
\mathbb{E}[ \| \nabla F(w_{t}) - v_{t} \|^2 ] 
& = \mathbb{E}[ \| \nabla F(w_{0}) - v_{0} \|^2 ] \\ & \qquad + \sum_{j = 1}^{t} \mathbb{E}[ \| v_{j} - v_{j-1} \|^2 ]  - \sum_{j = 1}^{t} \mathbb{E}[ \| \nabla F(w_{j}) - \nabla F(w_{j-1}) \|^2 ]. 
\end{align*}}
\end{lem}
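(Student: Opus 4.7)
The plan is to introduce the shorthand $\phi_j := \nabla F(w_j) - v_j$, derive a one-step identity for $\mathbb{E}[\|\phi_j\|^2]$ by conditioning on the filtration $\mathcal{F}_j$, and then telescope from $j=1$ to $j=t$. All of the structure comes from the fact that $\phi_j - \phi_{j-1}$ is a martingale difference with respect to $\{\mathcal{F}_j\}$; everything else is a routine expansion of a square.

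First I would verify this martingale-difference property. From the SARAH update \eqref{eq:vt},
\[
v_j - v_{j-1} = \nabla f(w_j;\xi_t) - \nabla f(w_{j-1};\xi_t),
\]
and since $\xi_j$ is independent of $\mathcal{F}_j$ while $w_{j-1}$ and $w_j$ are both $\mathcal{F}_j$-measurable, taking conditional expectations yields $\mathbb{E}[v_j - v_{j-1}\mid \mathcal{F}_j] = \nabla F(w_j) - \nabla F(w_{j-1})$. Hence $\mathbb{E}[\phi_j - \phi_{j-1}\mid \mathcal{F}_j] = 0$.

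Next I would expand
\[
\|\phi_j\|^2 = \|\phi_{j-1}\|^2 + 2\langle \phi_{j-1},\, \phi_j - \phi_{j-1}\rangle + \|\phi_j - \phi_{j-1}\|^2.
\]
Because $\phi_{j-1}$ is $\mathcal{F}_j$-measurable, the cross term vanishes upon taking $\mathbb{E}[\,\cdot\mid \mathcal{F}_j]$. For the last term, set $a := v_j - v_{j-1}$ and $b := \nabla F(w_j) - \nabla F(w_{j-1})$. Applying $\|a-b\|^2 = \|a\|^2 - 2\langle a, b\rangle + \|b\|^2$ with $b$ being $\mathcal{F}_j$-measurable and $\mathbb{E}[a\mid \mathcal{F}_j] = b$ gives
\[
\mathbb{E}\bigl[\|\phi_j - \phi_{j-1}\|^2 \,\big|\, \mathcal{F}_j\bigr] = \mathbb{E}\bigl[\|v_j - v_{j-1}\|^2 \,\big|\, \mathcal{F}_j\bigr] - \|\nabla F(w_j) - \nabla F(w_{j-1})\|^2.
\]
Taking total expectation then produces the one-step identity
\[
\mathbb{E}[\|\phi_j\|^2] = \mathbb{E}[\|\phi_{j-1}\|^2] + \mathbb{E}[\|v_j - v_{j-1}\|^2] - \mathbb{E}[\|\nabla F(w_j) - \nabla F(w_{j-1})\|^2],
\]
and summing (telescoping) from $j=1$ to $j=t$ delivers the claim.

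The only non-routine point I anticipate is the measurability bookkeeping. I would be careful that $v_{j-1}$, not just $w_{j-1}$, belongs to $\mathcal{F}_j$; this follows because $w_j = w_{j-1} - \eta v_{j-1}$ gives $v_{j-1} = \eta^{-1}(w_{j-1} - w_j)$, a deterministic function of $\mathcal{F}_j$-measurable quantities. With this in place, the cross term drops out cleanly and the identity is exact (no inequality is incurred). Observe that Assumption~\ref{ass_Lsmooth} plays no algebraic role here; it only guarantees finiteness of the second moments that appear, so that the conditional-expectation manipulations are justified.
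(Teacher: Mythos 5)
Your proof is correct and is essentially the paper's own argument: the paper expands $\nabla F(w_j)-v_j = [\nabla F(w_{j-1})-v_{j-1}] + [\nabla F(w_j)-\nabla F(w_{j-1})] - [v_j-v_{j-1}]$ conditionally on $\mathcal{F}_j$, uses $\mathbb{E}[v_j-v_{j-1}\mid\mathcal{F}_j]=\nabla F(w_j)-\nabla F(w_{j-1})$ to kill the cross terms, and telescopes, which is exactly your two-stage expansion phrased without the explicit martingale-difference language. (The only blemish is a harmless index typo, $\xi_t$ where you mean $\xi_j$.)
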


\begin{proof}
Let $\mathcal{F}_j = \sigma(w_0,w_1,\dots,w_j)$ be the
$\sigma$-algebra generated by $w_0,w_1,\dots,w_j$\footnote{$\mathcal{F}_{j}$ contains all the information of $w_{0},\dots,w_{j}$ as well as $v_0,\dots,v_{j-1}$}. We note that $\xi_j$ is independent of $\mathcal{F}_j$. For $j \geq 1$, we have
\begin{align*}
& \mathbb{E}[ \| \nabla F(w_{j}) - v_{j} \|^2 | \mathcal{F}_{j} ] 
\\ & = \mathbb{E}[ \| [\nabla F(w_{j-1}) - v_{j-1} ] + [ \nabla F(w_{j}) - \nabla F(w_{j-1}) ]  - [ v_{j} - v_{j-1} ] \|^2 | \mathcal{F}_{j} ]
\\
& = \| \nabla F(w_{j-1}) - v_{j-1} \|^2 + \| \nabla F(w_{j}) - \nabla F(w_{j-1}) \|^2 + \mathbb{E} [ \| v_{j} - v_{j-1}  \|^2 | \mathcal{F}_{j} ] 
\\
&\quad + 2 ( \nabla F(w_{j-1}) - v_{j-1} )^\top ( \nabla F(w_{j}) - \nabla F(w_{j-1}) ) \\
&\quad - 2 ( \nabla F(w_{j-1}) - v_{j-1} )^\top \mathbb{E}[ v_{j} - v_{j-1} | \mathcal{F}_{j} ]  \\
&\quad - 2 ( \nabla F(w_{j}) - \nabla F(w_{j-1}) )^\top \mathbb{E}[ v_{j} - v_{j-1} | \mathcal{F}_{j} ] 
\\
& = \| \nabla F(w_{j-1}) - v_{j-1} \|^2 - \| \nabla F(w_{j}) - \nabla F(w_{j-1}) \|^2 + \mathbb{E} [ \| v_{j} - v_{j-1}  \|^2 | \mathcal{F}_{j} ], 
\end{align*}
where the last equality follows from
\begin{align*}
& \mathbb{E}[ v_{j} - v_{j-1} | \mathcal{F}_{j} ] \overset{\eqref{eq:vt}}{= }\mathbb{E}[ \nabla f(w_{j}; \xi_j) - \nabla f(w_{j-1}; \xi_j) | \mathcal{F}_{j} ] 
= \nabla F(w_{j}) - \nabla F(w_{j-1}).
\end{align*}

By taking expectation for the above equation, we have
\begin{align*}
\mathbb{E}[ \| \nabla F(w_{j}) - v_{j} \|^2 ] &= \mathbb{E}[ \| \nabla F(w_{j-1}) - v_{j-1} \|^2 ] \\ & \qquad - \mathbb{E}[ \| \nabla F(w_{j}) - \nabla F(w_{j-1}) \|^2 ] + \mathbb{E}[ \| v_{j} - v_{j-1} \|^2 ]. 
\end{align*}

By summing over $j = 1,\dots,t\ (t\geq 1)$, we have
\begin{align*}
\mathbb{E}[ \| \nabla F(w_{t}) - v_{t} \|^2 ] &= \mathbb{E}[\| \nabla F(w_{0}) - v_{0} \|^2] \\ & \qquad + \sum_{j = 1}^{t} \mathbb{E}[ \| v_{j} - v_{j-1} \|^2 ]  - \sum_{j = 1}^{t} \mathbb{E}[ \| \nabla F(w_{j}) - \nabla F(w_{j-1}) \|^2 ]. 
\end{align*}
\end{proof}

\subsubsection{Non-Strongly Convex Case}

In this subsection, we analyze one-loop results of Inexact SARAH (Algorithm \ref{isarah_in}) in the non-strongly convex case. We first derive the bound for $\mathbb{E}[ \| \nabla F(w_{t}) - v_{t} \|^2 ]$. 
\begin{lem}\label{lem_bound_var_diff_str_02}
\textit{Suppose that Assumptions \ref{ass_Lsmooth} and \ref{ass_convex} hold. Consider $v_{t}$ defined as \eqref{eq:vt} in SARAH (Algorithm \ref{isarah}) with $\eta < 2/L$. Then we have that for any $t\geq 1$, 
\begin{align*}
\mathbb{E}[ \| \nabla F(w_{t}) - v_{t} \|^2 ] 
&\leq  \frac{\eta L}{2 - \eta L} \Big[ \mathbb{E}[ \|v_{0} \|^2] - \mathbb{E}[\| v_{t} \|^2] \Big] + \mathbb{E}[\| \nabla F(w_{0}) - v_{0} \|^2]. \tagthis\label{eq:bound1}
\end{align*}}
\end{lem}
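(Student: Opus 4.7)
The plan is to combine Lemma \ref{lem:var_diff_01} with a telescoping bound on the step sizes $\|v_j-v_{j-1}\|^2$ that comes from the co-coercivity inequality \eqref{ineq_convex}. Lemma \ref{lem:var_diff_01} already gives
\[
\mathbb{E}[\|\nabla F(w_t)-v_t\|^2] = \mathbb{E}[\|\nabla F(w_0)-v_0\|^2] + \sum_{j=1}^{t}\mathbb{E}[\|v_j-v_{j-1}\|^2] - \sum_{j=1}^{t}\mathbb{E}[\|\nabla F(w_j)-\nabla F(w_{j-1})\|^2],
\]
so after discarding the last (nonnegative) sum, the whole problem reduces to controlling $\sum_{j=1}^{t}\mathbb{E}[\|v_j-v_{j-1}\|^2]$ by the desired telescoping expression $\mathbb{E}[\|v_0\|^2]-\mathbb{E}[\|v_t\|^2]$ with the right constant.

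The key step is to exploit that $v_j-v_{j-1}=\nabla f(w_j;\xi_j)-\nabla f(w_{j-1};\xi_j)$ and that, under Assumptions \ref{ass_Lsmooth} and \ref{ass_convex}, $f(\cdot;\xi_j)$ is convex and $L$-smooth for every realization, so \eqref{ineq_convex} applies to each realization and yields
\[
(v_j-v_{j-1})^\top (w_j-w_{j-1}) \geq \tfrac{1}{L}\|v_j-v_{j-1}\|^2.
\]
Substituting $w_j-w_{j-1}=-\eta v_{j-1}$ from the iterate update gives $v_{j-1}^\top(v_j-v_{j-1})\leq -\tfrac{1}{\eta L}\|v_j-v_{j-1}\|^2$. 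Combining this with the identity $\|v_j\|^2-\|v_{j-1}\|^2-\|v_j-v_{j-1}\|^2=2v_{j-1}^\top(v_j-v_{j-1})$ and rearranging yields
\[
\|v_j-v_{j-1}\|^2 \leq \frac{\eta L}{2-\eta L}\bigl(\|v_{j-1}\|^2-\|v_j\|^2\bigr),
\]
where the denominator is positive precisely because $\eta<2/L$.

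Taking expectations, summing over $j=1,\dots,t$, and using the telescoping on the right-hand side gives
\[
\sum_{j=1}^{t}\mathbb{E}[\|v_j-v_{j-1}\|^2] \leq \frac{\eta L}{2-\eta L}\bigl(\mathbb{E}[\|v_0\|^2]-\mathbb{E}[\|v_t\|^2]\bigr).
\]
Plugging this into the identity from Lemma \ref{lem:var_diff_01} (and dropping the nonnegative gradient-difference sum) immediately produces the claimed bound \eqref{eq:bound1}. The only real obstacle is recognizing that $f(\cdot;\xi_j)$, not just $F$, must be convex and smooth so that co-coercivity applies pointwise in $\xi_j$ before taking expectations; this is precisely why Assumption \ref{ass_convex} is imposed on each realization of $\xi$ rather than only on $F$.
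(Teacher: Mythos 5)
Your proposal is correct and follows essentially the same route as the paper: both start from Lemma \ref{lem:var_diff_01}, drop the nonnegative sum of gradient differences, and bound $\sum_{j=1}^{t}\mathbb{E}[\|v_j-v_{j-1}\|^2]$ by the telescoping quantity $\frac{\eta L}{2-\eta L}\bigl(\mathbb{E}[\|v_0\|^2]-\mathbb{E}[\|v_t\|^2]\bigr)$ using the co-coercivity inequality \eqref{ineq_convex} applied to each realization $f(\cdot;\xi_j)$ together with the update $w_j-w_{j-1}=-\eta v_{j-1}$. The only cosmetic difference is that you derive the per-step inequality pointwise in $\xi_j$ before taking expectations, whereas the paper carries out the identical algebra inside a conditional expectation given $\mathcal{F}_j$; the two are equivalent.
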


\begin{proof}
For $j \geq 1$, we have
\begin{align*}
& \mathbb{E}[\| v_{j} \|^2 | \mathcal{F}_{j}] \\
&= \mathbb{E}[\| v_{j-1} - (\nabla f(w_{j-1}; \xi_j) - \nabla f(w_{j}; \xi_j) \|^2 | \mathcal{F}_{j} ] 
\\ 
&= \|v_{j-1} \|^2 \\ & \quad + \mathbb{E}\Big[\| \nabla f(w_{j-1}; \xi_j) - \nabla f(w_{j}; \xi_j) \|^2  
 - \tfrac{2}{\eta}(\nabla f(w_{j-1}; \xi_j) - \nabla f(w_{j}; \xi_j))^\top (w_{j-1} - w_{j}) | \mathcal{F}_{j} \Big] 
 \\ 
& \overset{\eqref{ineq_convex}}{\leq} \|v_{j-1} \|^2 + \mathbb{E}\Big[\| \nabla f(w_{j-1}; \xi_j) - \nabla f(w_{j}; \xi_j) \|^2 
- \tfrac{2}{L \eta} \| \nabla f(w_{j-1}; \xi_j) - \nabla f(w_{j}; \xi_j) \|^2 | \mathcal{F}_{j} \Big] 
\\ 
& = \|v_{j-1} \|^2 + \left(1 - \tfrac{2}{\eta L}\right) \mathbb{E} [ \| \nabla f(w_{j-1}; \xi_j) - \nabla f(w_{j}; \xi_j) \|^2 | \mathcal{F}_{j} ] \\
& \overset{\eqref{eq:vt}}{= } \|v_{j-1} \|^2 + \left(1 - \tfrac{2}{\eta L}\right) \mathbb{E} [ \| v_{j} - v_{j-1} \|^2 | \mathcal{F}_{j} ],
\end{align*}
which, if we take expectation, implies that
$$
\mathbb{E}[\| v_{j} - v_{j-1} \|^2] 
\leq \frac{\eta L}{2 - \eta L} \Big[ \mathbb{E}[ \|v_{j-1} \|^2] - \mathbb{E}[\| v_{j} \|^2] \Big], 
$$
when $\eta < 2/{L}$.

\quad By summing the above inequality over $j = 1,\dots, t\ (t\geq 1)$, we have
\begin{equation}\label{eq:sumover}
\sum_{j=1}^{t} \mathbb{E}[\| v_{j} - v_{j-1} \|^2] 
\leq \frac{\eta L}{2 - \eta L} \Big[ \mathbb{E}[ \|v_{0} \|^2] - \mathbb{E}[\| v_{t} \|^2] \Big].  
\end{equation}

By Lemma \ref{lem:var_diff_01}, we have
\begin{align*}
 \mathbb{E}[ \| \nabla F(w_{t}) - v_{t} \|^2 ] & \leq \sum_{j = 1}^{t} \mathbb{E}[ \| v_{j} - v_{j-1} \|^2 ] + \mathbb{E}[\| \nabla F(w_{0}) - v_{0} \|^2] \\ & \overset{\eqref{eq:sumover}}{\leq}  \frac{\eta L}{2 - \eta L} \Big[ \mathbb{E}[ \|v_{0} \|^2] - \mathbb{E}[\| v_{t} \|^2] \Big] + \mathbb{E}[\| \nabla F(w_{0}) - v_{0} \|^2]. \qedhere 
\end{align*}
\end{proof}

\begin{lem}\label{lem_bound_w0_02}
\textit{Suppose that Assumptions \ref{ass_Lsmooth} and \ref{ass_convex} hold. Consider $v_{0}$ defined as \eqref{eq:v_0} in iSARAH (Algorithm \ref{isarah}). Then we have, 
\begin{align*}
& \frac{ \eta L}{2 - \eta L}  \mathbb{E}[ \| v_0 \|^2 ] + \mathbb{E}[ \| \nabla F(w_{0}) - v_{0} \|^2 ] \\ & \qquad \qquad \qquad \leq \frac{2}{2 - \eta L} \left( \frac{4 L \mathbb{E} [F(w_0) - F(w_*)] + 2 \mathbb{E} \left[ \| \nabla f (w_{*}; \xi) \|^2 \right] - \mathbb{E} [\| \nabla F(w_0) \|^2]}{b} \right) \\ &\qquad \qquad \qquad \qquad + \frac{ \eta L}{2 - \eta L}  \mathbb{E}[ \| \nabla F(w_0) \|^2 ]. \tagthis \label{eq_0002}
\end{align*}}
\end{lem}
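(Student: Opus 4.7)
The plan is to reduce the claim to a direct application of Corollary \ref{cor_two_lemmas} after a clean bias-variance decomposition of $\mathbb{E}[\|v_0\|^2]$. The key observation is that since the $\zeta_i$ are i.i.d.\ with $\mathbb{E}[\nabla f(w_0;\zeta_i)|w_0]=\nabla F(w_0)$, the random vector $v_0$ is an unbiased estimator of $\nabla F(w_0)$ conditional on $w_0$. Consequently, conditioning on $w_0$ and then taking total expectation yields the identity
\begin{align*}
\mathbb{E}[\|v_0\|^2]=\mathbb{E}[\|\nabla F(w_0)\|^2]+\mathbb{E}[\|v_0-\nabla F(w_0)\|^2],
\end{align*}
which is the standard orthogonal decomposition into squared mean plus variance.

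Next I would substitute this identity into the left-hand side of \eqref{eq_0002} and collect the coefficients multiplying $\mathbb{E}[\|v_0-\nabla F(w_0)\|^2]$. Specifically,
\begin{align*}
\tfrac{\eta L}{2-\eta L}\mathbb{E}[\|v_0\|^2]+\mathbb{E}[\|\nabla F(w_0)-v_0\|^2]
=\tfrac{\eta L}{2-\eta L}\mathbb{E}[\|\nabla F(w_0)\|^2]+\Bigl(\tfrac{\eta L}{2-\eta L}+1\Bigr)\mathbb{E}[\|v_0-\nabla F(w_0)\|^2],
\end{align*}
and the simple algebraic identity $\tfrac{\eta L}{2-\eta L}+1=\tfrac{2}{2-\eta L}$ produces exactly the prefactor appearing in the stated bound.

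Finally, I would invoke Corollary \ref{cor_two_lemmas}, which under Assumptions \ref{ass_Lsmooth} and \ref{ass_convex} gives
\begin{align*}
\mathbb{E}[\|v_0-\nabla F(w_0)\|^2]\le\tfrac{4L\,\mathbb{E}[F(w_0)-F(w_*)]+2\,\mathbb{E}[\|\nabla f(w_*;\xi)\|^2]-\mathbb{E}[\|\nabla F(w_0)\|^2]}{b},
\end{align*}
and plug this directly into the previous display to obtain \eqref{eq_0002}. No step here presents a genuine obstacle; the entire argument is a two-line algebraic manipulation followed by an application of an already-established corollary. The only thing worth being slightly careful about is that the identity $\mathbb{E}[\|v_0\|^2]=\mathbb{E}[\|\nabla F(w_0)\|^2]+\mathbb{E}[\|v_0-\nabla F(w_0)\|^2]$ uses the unbiasedness of $v_0$ conditional on $w_0$, which relies on the independence of $\{\zeta_i\}$ from $w_0$ as generated by the algorithm; this is guaranteed by the sampling scheme in Algorithm \ref{isarah_in}.
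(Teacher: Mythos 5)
Your proposal is correct and follows essentially the same route as the paper's proof: both rest on the conditional unbiasedness of $v_0$ given $w_0$ (i.e., $\mathbb{E}[\|v_0-\nabla F(w_0)\|^2 \,|\, w_0]=\mathbb{E}[\|v_0\|^2\,|\,w_0]-\|\nabla F(w_0)\|^2$), the identity $\tfrac{\eta L}{2-\eta L}+1=\tfrac{2}{2-\eta L}$, and an application of Corollary \ref{cor_two_lemmas}, with the paper merely organizing the algebra conditionally on $w_0$ before taking total expectation. No gap; the argument is sound.
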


\begin{proof}
By Corollary \ref{cor_two_lemmas}, we have
\begin{align*}
& \frac{ \eta L}{2 - \eta L}  \mathbb{E}[ \| v_0 \|^2 | w_0 ] - \frac{ \eta L}{2 - \eta L}   \| \nabla F(w_0) \|^2  + \mathbb{E}[ \| \nabla F(w_{0}) - v_{0} \|^2 | w_0 ] \\ 
&\qquad\qquad\qquad  = \frac{2}{2 - \eta L} \Big [ \mathbb{E} [ \| v_0 \|^2 | w_0 ] - \| \nabla F(w_0) \|^2 \Big ] \\
&\qquad\qquad\qquad  = \frac{2}{2 - \eta L} \Big [ \mathbb{E} [ \| v_0 - \nabla F(w_0)\|^2 | w_0 \Big ] \\
&\qquad\qquad\qquad  \overset{\eqref{eq_prob_bound_w0_02}}{\leq} \frac{2}{2 - \eta L} \left( \frac{4 L [F(w_0) - F(w_*)] + 2 \mathbb{E}
 \left[ \| \nabla f (w_{*}; \xi) \|^2 \right] - \| \nabla F(w_0) \|^2}{b} \right).
\end{align*}

Taking the expectation and adding $\frac{ \eta L}{2 - \eta L}  \mathbb{E}[ \| \nabla F(w_0) \|^2 ]$ for both sides, the desired result is achieved. 
\end{proof}

We then derive this basic result for the convex case by using Lemmas \ref{lem_bound_var_diff_str_02} and \ref{lem_bound_w0_02}. 
\begin{lem}\label{lem_keylemma_convex}
\textit{Suppose that Assumptions \ref{ass_Lsmooth} and \ref{ass_convex} hold. Consider  iSARAH-IN (Algorithm \ref{isarah_in}) with $\eta \leq 1/L$. Then, we have 
\begin{align*}
\mathbb{E}[ \| \nabla F(\tilde{w})\|^2 ] 
& \leq \frac{2}{\eta (m + 1)} \mathbb{E} [ F(w_0) - F(w_{*})]  
+ \frac{ \eta L}{2 - \eta L} \mathbb{E} [ \| \nabla F(w_0) \|^2 ] \\ & + \frac{2}{2 - \eta L} \left( \frac{4 L \mathbb{E} [F(w_0) - F(w_*)] + 2 \mathbb{E} \left[ \| \nabla f (w_{*}; \xi) \|^2 \right] - \mathbb{E} [\| \nabla F(w_0) \|^2]}{b} \right), \tagthis \label{eq:agasgsasw}
\end{align*}
where $w_{*}$ is any optimal solution of $F(w)$; and $\xi$ is the random variable. }
\end{lem}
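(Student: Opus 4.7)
The plan is to chain together the two bookkeeping lemmas already established (Lemma \ref{lem_main_derivation} and Lemma \ref{lem_bound_var_diff_str_02}), then close everything by invoking Lemma \ref{lem_bound_w0_02} to replace the $v_0$--dependent quantities with quantities expressible in terms of $w_0$, $\nabla F(w_0)$, and the mini-batch size $b$. The observation that makes the proof clean is that $\tilde{w}$ is sampled uniformly from $\{w_0,\dots,w_m\}$, so
\[
\mathbb{E}[\|\nabla F(\tilde w)\|^2] \;=\; \tfrac{1}{m+1}\sum_{t=0}^{m}\mathbb{E}[\|\nabla F(w_t)\|^2],
\]
which reduces the lemma to controlling the sum on the right.

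First I would apply Lemma \ref{lem_main_derivation} to obtain
\[
\sum_{t=0}^{m}\mathbb{E}[\|\nabla F(w_t)\|^2] \;\leq\; \tfrac{2}{\eta}\mathbb{E}[F(w_0)-F(w_*)] + \sum_{t=0}^{m}\mathbb{E}[\|\nabla F(w_t)-v_t\|^2] - (1-L\eta)\sum_{t=0}^{m}\mathbb{E}[\|v_t\|^2].
\]
Then I would substitute the Lemma \ref{lem_bound_var_diff_str_02} bound termwise for $t\geq 1$ (the $t=0$ summand is just $\mathbb{E}[\|\nabla F(w_0)-v_0\|^2]$ itself, so it fits the same pattern trivially), giving
\[
\sum_{t=0}^{m}\mathbb{E}[\|\nabla F(w_t)-v_t\|^2] \;\leq\; (m+1)\,\mathbb{E}[\|\nabla F(w_0)-v_0\|^2] + \tfrac{\eta L}{2-\eta L}\Bigl(m\,\mathbb{E}[\|v_0\|^2]-\sum_{t=1}^{m}\mathbb{E}[\|v_t\|^2]\Bigr).
\]

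The main bookkeeping step is to combine the two negative sums in $\mathbb{E}[\|v_t\|^2]$ after this substitution. Since $\eta\leq 1/L$, both coefficients $(1-L\eta)$ and $\tfrac{\eta L}{2-\eta L}$ are nonnegative, so the combined $-\sum_{t=1}^{m}\mathbb{E}[\|v_t\|^2]$ contribution may simply be dropped; similarly $-(1-L\eta)\mathbb{E}[\|v_0\|^2]\leq 0$ is dropped, and the coefficient of $\mathbb{E}[\|v_0\|^2]$ can be loosened from $\tfrac{\eta L\,m}{2-\eta L}$ up to $\tfrac{\eta L\,(m+1)}{2-\eta L}$. Dividing by $m+1$ yields the intermediate bound
\[
\mathbb{E}[\|\nabla F(\tilde w)\|^2] \;\leq\; \tfrac{2}{\eta(m+1)}\mathbb{E}[F(w_0)-F(w_*)] + \mathbb{E}[\|\nabla F(w_0)-v_0\|^2] + \tfrac{\eta L}{2-\eta L}\mathbb{E}[\|v_0\|^2].
\]

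Finally I would apply Lemma \ref{lem_bound_w0_02} directly: its left-hand side is exactly the last two terms above (after adding and subtracting $\tfrac{\eta L}{2-\eta L}\mathbb{E}[\|\nabla F(w_0)\|^2]$), so the right-hand side of \ref{lem_bound_w0_02} plugs in and produces the claimed inequality \eqref{eq:agasgsasw}. The trickiest part is really just the sign bookkeeping when merging the $\sum_{t=1}^{m}\mathbb{E}[\|v_t\|^2]$ contributions from Lemma \ref{lem_main_derivation} and Lemma \ref{lem_bound_var_diff_str_02}; everything else is substitution, and no new analytic idea is needed beyond the three lemmas already proved.
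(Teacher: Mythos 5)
Your proposal is correct and follows essentially the same route as the paper's own proof: apply Lemma \ref{lem_main_derivation}, sum the bound of Lemma \ref{lem_bound_var_diff_str_02} over $t$, use the uniform sampling of $\tilde{w}$ to divide by $m+1$, and close with Lemma \ref{lem_bound_w0_02}. The only cosmetic difference is that you carry the nonpositive $-\sum_{t\geq 1}\mathbb{E}[\|v_t\|^2]$ terms a bit longer before discarding them (and loosen $m$ to $m+1$ on the $\mathbb{E}[\|v_0\|^2]$ coefficient), whereas the paper drops them immediately; both are valid and yield the identical intermediate bound.
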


\begin{proof}
By Lemma \ref{lem_bound_var_diff_str_02}, we have
\begin{align*}
\sum_{t=0}^{m} \mathbb{E}[ \| \nabla F(w_{t}) - v_{t} \|^2 ] 
 \leq  \frac{m\eta L}{2 - \eta L} \mathbb{E}[ \|v_{0} \|^2] + (m+1) \mathbb{E}[\| \nabla F(w_{0}) - v_{0} \|^2]. \tagthis \label{eq:abcdef}
\end{align*}
Hence, by Lemma \ref{lem_main_derivation} with $\eta\leq 1/L$, we have
\allowdisplaybreaks
\begin{align*}
\sum_{t=0}^{m} \mathbb{E}[ \| \nabla F(w_{t})\|^2 ] 
 & \leq \frac{2}{\eta} \mathbb{E}[ F(w_{0}) - F(w_{*})] + \sum_{t=0}^{m} \mathbb{E}[ \| \nabla F(w_{t}) - v_{t} \|^2 ] 
 \\
& \overset{\eqref{eq:abcdef}}{\leq} \frac{2}{\eta} \mathbb{E}[ F(w_{0}) - F(w_{*})]  + \frac{m\eta L}{2 - \eta L} \mathbb{E}[ \| v_{0} \|^2 ] \\ & \qquad + (m+1) \mathbb{E}[\| \nabla F(w_{0}) - v_{0} \|^2]. \tagthis\label{eq:thm1conv}
\end{align*}
Since $\tilde{w} = w_t$, where $t$ is picked uniformly at random from $\{0,1,\dots,m\}$. The following holds,
\begin{align*}
& \mathbb{E}[ \| \nabla F(\tilde{w})\|^2 ] = \frac{1}{m+1}\sum_{t=0}^{m} \mathbb{E}[ \| \nabla F(w_{t})\|^2 ]
 \\
&\overset{\eqref{eq:thm1conv}}{\leq} \frac{2}{\eta (m + 1)} \mathbb{E}[ F( w_{0}) - F(w_{*})]   + \frac{ \eta L}{2 - \eta L}  \mathbb{E}[ \| v_0 \|^2 ] + \mathbb{E}[\| \nabla F(w_{0}) - v_{0} \|^2] 
\\
& \overset{\eqref{eq_0002}}{\leq} \frac{2}{\eta (m + 1)} \mathbb{E}[ F(w_0) - F(w_{*})]  
+ \frac{ \eta L}{2 - \eta L}  \mathbb{E}[ \| \nabla F(w_0) \|^2 ] \\ &\qquad + \frac{2}{2 - \eta L} \left( \frac{4 L \mathbb{E} [F(w_0) - F(w_*)] + 2 \mathbb{E} \left[ \| \nabla f (w_{*}; \xi) \|^2 \right] - \mathbb{E} [\| \nabla F(w_0) \|^2]}{b} \right). \qedhere 
\end{align*}
\end{proof}

This expected bound for $\| \nabla F(\tilde{w})\|^2$ will be used for deriving both one-loop and multiple-loop results in the convex case. 

Lemma \ref{lem_keylemma_convex} can be used to get the following result for non-strongly convex. 

\begin{thm}\label{thm:generalconvex_SARAH_IN}
\textit{Suppose that Assumptions \ref{ass_Lsmooth}, \ref{ass_convex}, and \ref{ass_bounded_gradient_solution} hold. Consider  iSARAH-IN (Algorithm \ref{isarah_in}) with $\eta = \frac{1}{L\sqrt{m+1}} \leq \frac{1}{L}$, $b = 2\sqrt{m+1}$ and a given $w_0$. Then we have, 
\begin{align*}
\mathbb{E}[ \| \nabla F(\tilde{w})\|^2 ] \leq \frac{1}{\sqrt{m+1}} \left[ 6 L [ F(w_0) - F(w_{*})]  + 2 \sigma_*^2 \right ]. 
\end{align*}}
\end{thm}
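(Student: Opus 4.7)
The plan is to obtain Theorem \ref{thm:generalconvex_SARAH_IN} as a direct specialization of Lemma \ref{lem_keylemma_convex}, which already packages all of the variance, descent, and sampling estimates we need. Since iSARAH-IN is being run once from $w_0$ with the stated $\eta$ and $b$, and all hypotheses of Lemma \ref{lem_keylemma_convex} are in force (the extra Assumption \ref{ass_bounded_gradient_solution} is only used to turn $\mathbb{E}[\|\nabla f(w_*;\xi)\|^2]$ into $\sigma_*^2$), no additional analytic work should be necessary -- the proof is a substitution followed by careful bookkeeping of the constants.

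Concretely, I would first record the three coefficients appearing in \eqref{eq:agasgsasw} under the choice $\eta = 1/(L\sqrt{m+1})$ and $b = 2\sqrt{m+1}$. The leading term becomes $\tfrac{2}{\eta(m+1)} = \tfrac{2L}{\sqrt{m+1}}$. Since $\eta L = 1/\sqrt{m+1} \leq 1$, we have $2 - \eta L \geq 1$, which will be used to bound the two fractions with $2 - \eta L$ in the denominator from above by $2$. In particular $\tfrac{2}{(2-\eta L)\,b} \leq \tfrac{1}{\sqrt{m+1}}$ and $\tfrac{\eta L}{2 - \eta L} \leq \eta L = \tfrac{1}{\sqrt{m+1}}$.

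The key observation, and the only step that might look delicate, is that the two terms involving $\mathbb{E}[\|\nabla F(w_0)\|^2]$ in \eqref{eq:agasgsasw} cancel exactly. The coefficient of $\mathbb{E}[\|\nabla F(w_0)\|^2]$ is
\begin{equation*}
\frac{\eta L}{2 - \eta L} - \frac{2}{(2-\eta L)\,b} = \frac{1}{2-\eta L}\left(\eta L - \frac{2}{b}\right),
\end{equation*}
and $\eta L = 1/\sqrt{m+1}$ while $2/b = 1/\sqrt{m+1}$, so the parenthesis vanishes. This is precisely why the parameter pair $(\eta,b)$ is chosen this way: it makes the $\|\nabla F(w_0)\|^2$ term, which is not obviously controllable at a single outer step, drop out of the bound.

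After this cancellation, only the terms with $\mathbb{E}[F(w_0) - F(w_*)]$ and $\mathbb{E}[\|\nabla f(w_*;\xi)\|^2]$ remain. Applying Assumption \ref{ass_bounded_gradient_solution} to replace $\mathbb{E}[\|\nabla f(w_*;\xi)\|^2]$ by $\sigma_*^2$, and using the bounds $\tfrac{2}{\eta(m+1)} = \tfrac{2L}{\sqrt{m+1}}$ together with $\tfrac{2}{(2-\eta L)b}\cdot 4L \leq \tfrac{4L}{\sqrt{m+1}}$ and $\tfrac{2}{(2-\eta L)b}\cdot 2\sigma_*^2 \leq \tfrac{2\sigma_*^2}{\sqrt{m+1}}$, I collect
\begin{equation*}
\mathbb{E}[\|\nabla F(\tilde{w})\|^2] \leq \frac{1}{\sqrt{m+1}}\Bigl[(2L + 4L)\mathbb{E}[F(w_0) - F(w_*)] + 2\sigma_*^2\Bigr],
\end{equation*}
which is exactly the stated bound. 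The main obstacle, if there is one, is simply tracking that the upper bound $2 - \eta L \geq 1$ is tight enough to yield the clean constant $6L$ rather than something larger; this holds precisely because $\eta L \leq 1$ is assumed.
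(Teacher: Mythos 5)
Your proposal is correct and follows essentially the same route as the paper's own proof: specialize Lemma \ref{lem_keylemma_convex} with $\eta = \frac{1}{L\sqrt{m+1}}$ and $b = 2\sqrt{m+1}$, exploit the exact cancellation of the $\mathbb{E}[\|\nabla F(w_0)\|^2]$ terms (which the paper uses implicitly in its second inequality, whereas you state it explicitly), bound $\frac{1}{2-\eta L}\leq 1$ via $\eta L \leq 1$, and finish with Assumption \ref{ass_bounded_gradient_solution}. The constant bookkeeping ($2L + 4L = 6L$ and $2\sigma_*^2$) matches the paper exactly.
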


\begin{proof}
By Lemma \ref{lem_keylemma_convex}, for any given $w_0$, we have
\begin{align*}
\mathbb{E}[ \| \nabla F(\tilde{w})\|^2 ] 
& \leq \frac{2}{\eta (m + 1)} [ F(w_0) - F(w_{*})]  
+ \frac{ \eta L}{2 - \eta L}  \| \nabla F(w_0) \|^2 \\ 
&\qquad + \frac{2}{2 - \eta L} \left( \frac{4 L [F(w_0) - F(w_*)] + 2 \mathbb{E} \left[ \| \nabla f (w_{*}; \xi) \|^2 \right] -  \| \nabla F(w_0) \|^2}{b} \right) \\
& \leq \frac{2 L}{\sqrt{m+1}} [ F(w_0) - F(w_{*})]  + \frac{1}{2 - \eta L} \frac{4L}{\sqrt{m+1}} [ F(w_0) - F(w_{*})] \\ & \qquad + \frac{2}{2 - \eta L} \frac{\mathbb{E} \left[ \| \nabla f (w_{*}; \xi) \|^2 \right]}{\sqrt{m+1}} \\
& \leq \frac{2 L}{\sqrt{m+1}} [ F(w_0) - F(w_{*})] \\ & \qquad + \frac{1}{\sqrt{m+1}} \left[ 4 L [ F(w_0) - F(w_{*})]  + 2 \mathbb{E} [ \| \nabla f (w_{*}; \xi) \|^2 ] \right] \\
& \overset{\eqref{eq_bounded_gradient_solution}}{\leq} \frac{1}{\sqrt{m+1}} \left[ 6 L [ F(w_0) - F(w_{*})]  + 2 \sigma_*^2 \right ].
\end{align*}

The second inequality follows since $\eta = \frac{1}{L \sqrt{m+1}}$ and $b = 2 \sqrt{m+1}$. The third inequality follows since $\eta \leq \frac{1}{L}$, which implies $\frac{1}{2 - \eta L} \leq 1$. 
\end{proof}

Based on Theorem~\ref{thm:generalconvex_SARAH_IN}, we are able to derive the following total complexity for iSARAH-IN in the non-strongly convex case.

\begin{cor}\label{cor:generalconvex_1}
\textit{Suppose that Assumptions \ref{ass_Lsmooth}, \ref{ass_convex}, and \ref{ass_bounded_gradient_solution} hold. Consider  iSARAH-IN (Algorithm \ref{isarah_in}) with the learning rate $\eta = \frac{1}{L \sqrt{m+1}}$ and the number of samples $b = 2\sqrt{m+1}$, where $m$ is the total number of iterations, then  $\|\nabla F(\tilde{w})\|^2$ converges
sublinearly in expectation  with a rate of $\Ocal\left(\frac{\max\{ L , \sigma_*^2 \}}{\sqrt{m+1}}\right)$, and therefore, the total complexity to achieve an $\epsilon$-accurate solution is 
\begin{align*}
    \Ocal \left( \frac{\max\{ L , \sigma_*^2 \}}{\epsilon} + \frac{\max\{ L^2 , \sigma_*^4 \}}{\epsilon^2} \right). 
\end{align*}}
\end{cor}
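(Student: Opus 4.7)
The plan is to obtain both assertions as essentially direct bookkeeping on top of Theorem \ref{thm:generalconvex_SARAH_IN}. First I would observe that the bracketed quantity $6L[F(w_0) - F(w_*)] + 2\sigma_*^2$ in the bound of Theorem \ref{thm:generalconvex_SARAH_IN} is a constant depending on the initial suboptimality, $L$, and $\sigma_*^2$, which is $\mathcal{O}(\max\{L, \sigma_*^2\})$ up to the fixed factor $F(w_0)-F(w_*)$. Dividing by $\sqrt{m+1}$ therefore yields the sublinear rate
\begin{equation*}
\mathbb{E}[\|\nabla F(\tilde{w})\|^2] = \mathcal{O}\!\left(\frac{\max\{L,\sigma_*^2\}}{\sqrt{m+1}}\right),
\end{equation*}
which is the first claim.

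Next I would invert this to obtain the required $m$. Setting the right-hand side equal to $\epsilon$ gives
\begin{equation*}
m+1 = \mathcal{O}\!\left(\frac{\max\{L^2,\sigma_*^4\}}{\epsilon^2}\right),
\end{equation*}
so $m = \mathcal{O}(\max\{L^2,\sigma_*^4\}/\epsilon^2)$ inner iterations suffice to guarantee $\mathbb{E}[\|\nabla F(\tilde{w})\|^2]\leq \epsilon$.

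Finally, I would count stochastic gradient evaluations. A single call to iSARAH-IN with parameters as prescribed costs $b = 2\sqrt{m+1}$ gradients for the initial mini-batch $v_0$ in \eqref{eq:v_0}, plus two gradients per inner update \eqref{eq:vt} for $t = 1,\dots,m-1$, giving a total of $2\sqrt{m+1} + 2(m-1)$ stochastic gradient evaluations. Substituting the value of $m$ above yields
\begin{equation*}
b + 2(m-1) = \mathcal{O}\!\left(\frac{\max\{L,\sigma_*^2\}}{\epsilon}\right) + \mathcal{O}\!\left(\frac{\max\{L^2,\sigma_*^4\}}{\epsilon^2}\right),
\end{equation*}
which matches the advertised complexity.

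There is no real obstacle here: all the analytic work has been absorbed into Theorem \ref{thm:generalconvex_SARAH_IN}, and the only subtlety is bookkeeping the $\sqrt{m+1}$ cost of the outer mini-batch $v_0$ alongside the $\Theta(m)$ cost of the inner loop, which produces the two additive terms in the final bound. The $1/\epsilon$ term comes from the mini-batch size $b=2\sqrt{m+1}$ and the $1/\epsilon^2$ term comes from the $m$ inner SARAH updates.
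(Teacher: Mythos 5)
Your proposal is correct and follows essentially the same route as the paper: invert the bound of Theorem \ref{thm:generalconvex_SARAH_IN} to get $m+1 = \Ocal\left(\max\{L^2,\sigma_*^4\}/\epsilon^2\right)$, then count total gradient evaluations as the mini-batch cost $b = 2\sqrt{m+1}$ plus two evaluations per inner step. The only (immaterial) difference is that you count $2(m-1)$ inner-loop evaluations while the paper writes $2m$; both are $\Theta(m)$ and yield the identical complexity bound.
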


\begin{proof}
It is easy to see that to achieve $\mathbb{E}[ \| \nabla F(\tilde{w})\|^2 ] \leq \epsilon$ we need 
\begin{align*}
    m + 1 = \frac{(6 L [ F(w_0) - F(w_{*})]  + 2 \sigma_*^2)^2}{\epsilon^2},
\end{align*}
and hence the total work is 
\begin{align*}
    b + 2 m &= 2\sqrt{m+1} + 2m = \Ocal \left( \frac{\max\{ L , \sigma_*^2 \}}{\epsilon} + \frac{\max\{ L^2 , \sigma_*^4 \}}{\epsilon^2} \right). \qedhere 
\end{align*}
\end{proof}

\subsubsection{Non-Convex Case}

We now move to the non-convex case. We begin by stating and proving a lemma similar to Lemma \ref{lem_bound_var_diff_str_02}, bounding $\mathbb{E}[ \| \nabla F(w_{t}) - v_{t} \|^2 ]$, but without Assumption 
 \ref{ass_convex}. 
\begin{lem}\label{lem:var_diff_mb_02}
\textit{Suppose that Assumption \ref{ass_Lsmooth} holds. Consider $v_{t}$ defined as \eqref{eq:vt} in iSARAH-IN (Algorithm \ref{isarah_in}). Then for any $t\geq 1$, 
\begin{align*}
\mathbb{E}[ \| \nabla F(w_{t}) - v_{t} \|^2 ]  \leq \mathbb{E}[ \| \nabla F(w_{0}) - v_{0} \|^2 ] + L^2 \eta^2 \sum_{j=1}^{t} \mathbb{E}[\| v_{j-1} \|^2]. \tagthis \label{eq_nc_001}
\end{align*}}
\end{lem}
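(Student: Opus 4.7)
The plan is to combine Lemma \ref{lem:var_diff_01}, which gives an exact telescoping decomposition of $\mathbb{E}[\|\nabla F(w_t) - v_t\|^2]$, with a pointwise bound on each term $\|v_j - v_{j-1}\|^2$ coming purely from $L$-smoothness of $f(\cdot;\xi)$. Since this is the non-convex case, I cannot use \eqref{ineq_convex} as was done in Lemma \ref{lem_bound_var_diff_str_02}; instead I will use the cruder inequality $\|\nabla f(w;\xi) - \nabla f(w';\xi)\| \leq L\|w - w'\|$ from Assumption \ref{ass_Lsmooth} directly.

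The key step is to observe that by the SARAH update rule \eqref{eq:vt},
\begin{align*}
v_j - v_{j-1} = \nabla f(w_j;\xi_j) - \nabla f(w_{j-1};\xi_j),
\end{align*}
so by Assumption \ref{ass_Lsmooth},
\begin{align*}
\|v_j - v_{j-1}\|^2 \leq L^2 \|w_j - w_{j-1}\|^2 = L^2 \eta^2 \|v_{j-1}\|^2,
\end{align*}
where the equality uses the iterate update \eqref{eq:iterate}. Taking expectations, $\mathbb{E}[\|v_j - v_{j-1}\|^2] \leq L^2\eta^2 \mathbb{E}[\|v_{j-1}\|^2]$.

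Then I plug this into the identity from Lemma \ref{lem:var_diff_01} and simply discard the nonpositive term $-\sum_{j=1}^t \mathbb{E}[\|\nabla F(w_j) - \nabla F(w_{j-1})\|^2]$, giving
\begin{align*}
\mathbb{E}[\|\nabla F(w_t) - v_t\|^2] \leq \mathbb{E}[\|\nabla F(w_0) - v_0\|^2] + L^2\eta^2 \sum_{j=1}^t \mathbb{E}[\|v_{j-1}\|^2],
\end{align*}
which is exactly \eqref{eq_nc_001}.

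There is no real obstacle here; the lemma is essentially a direct corollary of Lemma \ref{lem:var_diff_01} together with the $L$-smoothness bound applied per realization of $\xi_j$. The only thing to be mindful of is that the bound $\|v_j - v_{j-1}\|^2 \leq L^2\eta^2\|v_{j-1}\|^2$ holds pathwise (for each fixed $\xi_j$), so taking conditional expectation w.r.t.\ $\mathcal{F}_j$ and then total expectation is clean and requires no convexity.
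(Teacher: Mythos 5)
Your proof is correct and matches the paper's argument exactly: both use the SARAH update \eqref{eq:vt} to identify $v_j - v_{j-1} = \nabla f(w_j;\xi_j) - \nabla f(w_{j-1};\xi_j)$, bound it pathwise by $L^2\eta^2\|v_{j-1}\|^2$ via Assumption \ref{ass_Lsmooth} and the iterate update \eqref{eq:iterate}, then substitute into Lemma \ref{lem:var_diff_01} while discarding the nonpositive sum $-\sum_{j=1}^{t}\mathbb{E}[\|\nabla F(w_j) - \nabla F(w_{j-1})\|^2]$. No gaps; your added remark about the bound holding per realization before taking expectations is exactly the justification the paper implicitly relies on.
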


\begin{proof}
We have, for $t \geq 1$, 
\begin{align*}
\| v_{t} - v_{t-1} \|^2 &\overset{\eqref{eq:vt}}{=} \| \nabla f (w_{t};\xi_t) - \nabla f(w_{t-1};\xi_t) \|^2 
\overset{\eqref{eq:Lsmooth_basic}}{\leq} L^2 \| w_{t} - w_{t-1} \|^2 = L^2 \eta^2 \| v_{t-1} \|^2. \tagthis \label{eq:afsag242}
\end{align*}

Hence, by Lemma \ref{lem:var_diff_01}, 
\begin{align*}
\mathbb{E}[ \| \nabla F(w_{t}) - v_{t} \|^2 ] & \leq \mathbb{E}[\| \nabla F(w_{0}) - v_{0} \|^2] + \sum_{j = 1}^{t} \mathbb{E}[ \| v_{j} - v_{j-1} \|^2 ] \\
&  \overset{\eqref{eq:afsag242}}{\leq} \mathbb{E}[\| \nabla F(w_{0}) - v_{0} \|^2] + L^2 \eta^2 \sum_{j=1}^t \mathbb{E}[ \|v_{j-1}\|^2]. \qedhere 
\end{align*} 
\end{proof}

\begin{lem}\label{lem_nc_lt0}
\textit{Suppose that Assumption \ref{ass_Lsmooth} holds. Consider $v_{t}$ defined as \eqref{eq:vt} in iSARAH-IN (Algorithm \ref{isarah_in}) with $\eta \leq \frac{2}{L(\sqrt{1 + 4m} + 1)}$. Then we have
\begin{align*}
L^2 \eta^2 \sum_{t=0}^{m} \sum_{j=1}^{t} \mathbb{E}[\| v_{j-1} \|^2]  
 - ( 1 - L\eta ) \sum_{t=0}^{m} \mathbb{E} [ \| v_{t} \|^2 ] \leq 0. \tagthis \label{eq_nc_002}
\end{align*}}
\end{lem}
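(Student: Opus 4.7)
The plan is to prove this by a direct algebraic argument: swap the order of summation in the double sum, bound the resulting coefficient, and then verify that the hypothesis on $\eta$ forces the combined expression to be nonpositive.

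First, I would exchange the order of summation. Writing $\sum_{t=0}^{m}\sum_{j=1}^{t}$, the term $\mathbb{E}[\|v_{j-1}\|^2]$ with fixed $j \in \{1,\dots,m\}$ is counted once for every $t \in \{j,\dots,m\}$, i.e., $(m-j+1)$ times. Reindexing by $k=j-1$, this gives
\begin{align*}
\sum_{t=0}^{m}\sum_{j=1}^{t}\mathbb{E}[\|v_{j-1}\|^2] \;=\; \sum_{k=0}^{m-1}(m-k)\,\mathbb{E}[\|v_k\|^2] \;\leq\; m\sum_{k=0}^{m-1}\mathbb{E}[\|v_k\|^2] \;\leq\; m\sum_{t=0}^{m}\mathbb{E}[\|v_t\|^2],
\end{align*}
where in the last step I extended the range to include $t=m$ using nonnegativity of $\mathbb{E}[\|v_m\|^2]$.

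Next, plugging this back into the left-hand side of \eqref{eq_nc_002} yields the upper bound
\begin{align*}
L^2\eta^2 \sum_{t=0}^{m}\sum_{j=1}^{t}\mathbb{E}[\|v_{j-1}\|^2] - (1-L\eta)\sum_{t=0}^{m}\mathbb{E}[\|v_t\|^2] \;\leq\; \bigl(L^2\eta^2 m - (1-L\eta)\bigr)\sum_{t=0}^{m}\mathbb{E}[\|v_t\|^2].
\end{align*}
Since $\sum_{t=0}^{m}\mathbb{E}[\|v_t\|^2] \geq 0$, it suffices to show that the scalar coefficient $L^2\eta^2 m + L\eta - 1$ is nonpositive. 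Treating this as a quadratic in $L\eta$, its positive root is
\begin{align*}
L\eta = \frac{-1 + \sqrt{1+4m}}{2m} = \frac{2}{\sqrt{1+4m}+1},
\end{align*}
where the second equality follows by rationalizing. Thus $L^2\eta^2 m + L\eta - 1 \leq 0$ is equivalent to $\eta \leq \frac{2}{L(\sqrt{1+4m}+1)}$, which is precisely the hypothesis of the lemma.

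There is no serious obstacle here — the statement is an elementary inequality once the double sum is rewritten. The only subtle point is choosing a sharp enough bound on the coefficient $(m-k)$: bounding it by $m$ (rather than something cruder) is what makes the threshold $\frac{2}{L(\sqrt{1+4m}+1)}$ tight and matches the quadratic $L^2\eta^2 m + L\eta - 1 \leq 0$.
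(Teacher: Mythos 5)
Your proof is correct and follows essentially the same route as the paper: reindex the double sum to $\sum_{k=0}^{m-1}(m-k)\,\mathbb{E}[\|v_k\|^2]$, bound the coefficient $(m-k)$ by $m$, and observe that the hypothesis on $\eta$ makes $L^2\eta^2 m + L\eta - 1 \leq 0$ since $\frac{2}{\sqrt{1+4m}+1}$ is the positive root of this quadratic in $L\eta$. The only cosmetic difference is that you extend the positive sum to include the $t=m$ term, whereas the paper drops that term from the negative sum; the two manipulations are equivalent.
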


\begin{proof}
For $\eta \leq \frac{2}{L(\sqrt{1 + 4m} + 1)}$, we have
\allowdisplaybreaks
\begin{align*}
& L^2 \eta^2 \sum_{t=0}^{m} \sum_{j=1}^{t} \mathbb{E}[\| v_{j-1} \|^2]  
 - ( 1 - L\eta ) \sum_{t=0}^{m} \mathbb{E} [ \| v_{t} \|^2 ] \\
& \qquad \qquad \qquad \qquad = L^2 \eta^2 \Big[ m  \mathbb{E}\|v_{0} \|^2 + (m-1) \mathbb{E}\|v_{1} \|^2 + \dots + \mathbb{E}\|v_{m-1}\|^2 \Big ] \\ & \qquad \qquad \qquad \qquad \qquad - (1 - L\eta) \Big [ \mathbb{E}\|v_{0} \|^2 + \mathbb{E}\|v_{1} \|^2 + \dots + \mathbb{E}\|v_{m}\|^2  \Big ] \\
& \qquad \qquad \qquad \qquad \leq [L^2\eta^2 m - (1 - L\eta)] \sum_{t=1}^{m} \mathbb{E} [ \| v_{t-1} \|^2 ] \leq 0,
\end{align*}

since $\eta = \frac{2}{L(\sqrt{1 + 4m} + 1)}$ is a root of the equation $L^2\eta^2 m - (1 - L\eta) = 0$. 
\end{proof}

With the help of the above lemmas, we are able to derive our result for non-convex.
\begin{thm}\label{thm:nonconvex_SARAH_IN}
\textit{Suppose that Assumption \ref{ass_Lsmooth} holds and $\mathbb{E} [ \| \nabla f (w_{0}; \xi) \|^2 ]$ is finite. Consider  iSARAH-IN (Algorithm \ref{isarah_in}) with $\eta \leq \frac{2}{L(\sqrt{1 + 4m} + 1)} \leq \frac{1}{L}$, $b = \sqrt{m+1}$ and a given $w_0$. Then we have, 
\begin{align*}
\mathbb{E}[ \| \nabla F(\tilde{w})\|^2 ] \leq \frac{2}{\eta(m+1)} [F(w_0) - F^*] + \frac{1}{\sqrt{m+1}} \Big( \mathbb{E}[ \| \nabla f (w_{0} ; \xi) \|^2 ] \Big), \tagthis \label{eq_thm_003}
\end{align*}
where $F^*$ is any lower bound of $F$; and $\xi$ is some random variable.}  
\end{thm}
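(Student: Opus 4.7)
The plan is to chain together Lemma \ref{lem_main_derivation}, Lemma \ref{lem:var_diff_mb_02}, and Lemma \ref{lem_nc_lt0}, then handle the single remaining ``initialization error'' term $\mathbb{E}[\|\nabla F(w_0)-v_0\|^2]$ with Lemma \ref{lem_bound_w0}, before averaging over $t \in \{0,\dots,m\}$ to turn the telescoped sum into a bound on $\mathbb{E}[\|\nabla F(\tilde w)\|^2]$.

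First, I would start from Lemma \ref{lem_main_derivation} with $F(w_*)$ replaced by $F^*$ (the derivation there only used that $F(w_m) \ge F(w_*)$, so any lower bound $F^*$ of $F$ works identically). This gives
\begin{align*}
\sum_{t=0}^{m} \mathbb{E}[\|\nabla F(w_t)\|^2] &\leq \tfrac{2}{\eta}[F(w_0)-F^*] + \sum_{t=0}^{m}\mathbb{E}[\|\nabla F(w_t)-v_t\|^2] - (1-L\eta)\sum_{t=0}^{m}\mathbb{E}[\|v_t\|^2].
\end{align*}
Next, apply Lemma \ref{lem:var_diff_mb_02} termwise to the middle sum: each $\mathbb{E}[\|\nabla F(w_t)-v_t\|^2]$ contributes $\mathbb{E}[\|\nabla F(w_0)-v_0\|^2] + L^2\eta^2 \sum_{j=1}^{t}\mathbb{E}[\|v_{j-1}\|^2]$, so summing over $t$ produces $(m+1)\mathbb{E}[\|\nabla F(w_0)-v_0\|^2] + L^2\eta^2\sum_{t=0}^{m}\sum_{j=1}^{t}\mathbb{E}[\|v_{j-1}\|^2]$.

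The key cancellation is then Lemma \ref{lem_nc_lt0}, which is exactly designed so that with $\eta \leq \tfrac{2}{L(\sqrt{1+4m}+1)}$ the double sum involving $\|v_{j-1}\|^2$ is dominated by $(1-L\eta)\sum_t \mathbb{E}[\|v_t\|^2]$. After that cancellation only the initialization term survives, leaving
\begin{align*}
\sum_{t=0}^{m}\mathbb{E}[\|\nabla F(w_t)\|^2] &\leq \tfrac{2}{\eta}[F(w_0)-F^*] + (m+1)\mathbb{E}[\|\nabla F(w_0)-v_0\|^2].
\end{align*}
For the initialization term I apply Lemma \ref{lem_bound_w0} to $v_0 = \tfrac{1}{b}\sum_{i=1}^{b}\nabla f(w_0;\zeta_i)$, yielding $\mathbb{E}[\|\nabla F(w_0)-v_0\|^2] \le \mathbb{E}[\|\nabla f(w_0;\xi)\|^2]/b$; note this step does \emph{not} need convexity, which is essential in the non-convex regime since Corollary \ref{cor_two_lemmas} is unavailable here.

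Finally I divide both sides by $m+1$, use that $\tilde w$ is uniform on $\{w_0,\dots,w_m\}$ so the left-hand side equals $\mathbb{E}[\|\nabla F(\tilde w)\|^2]$, and substitute $b = \sqrt{m+1}$ to turn $(m+1)/b$ into $\sqrt{m+1}$; after one more division by $m+1$ the second term becomes $\mathbb{E}[\|\nabla f(w_0;\xi)\|^2]/\sqrt{m+1}$, matching \eqref{eq_thm_003}. I expect no real obstacle beyond checking that the stepsize condition $\eta \le \tfrac{2}{L(\sqrt{1+4m}+1)}$ is tight enough to invoke Lemma \ref{lem_nc_lt0}; the slight subtlety is that one wants to absorb the $L^2\eta^2$ telescoping double sum into the $(1-L\eta)$ single sum, and the chosen $\eta$ is precisely the positive root of $L^2\eta^2 m - (1-L\eta) = 0$, so the cancellation is sharp rather than wasteful.
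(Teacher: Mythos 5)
Your proposal is correct and takes essentially the same route as the paper's own proof: both chain Lemma \ref{lem_main_derivation} (with the lower bound $F^*$ replacing $F(w_*)$), Lemma \ref{lem:var_diff_mb_02}, and the cancellation in Lemma \ref{lem_nc_lt0}, then control the remaining term $\mathbb{E}[\|\nabla F(w_0)-v_0\|^2]$ via Lemma \ref{lem_bound_w0} and average over $t\in\{0,\dots,m\}$ with $b=\sqrt{m+1}$. Your remark that Lemma \ref{lem_bound_w0} must be invoked directly (rather than Corollary \ref{cor_two_lemmas}, which requires convexity) is exactly what the paper does.
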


\begin{proof}
Let $F^*$ be any lower bound of $F$. By Lemma \ref{lem_main_derivation} and since $\tilde{w} = w_t$, where $t$ is picked uniformly at random from $\{0,1,\dots,m\}$, we have
\allowdisplaybreaks
\begin{align*}
& \mathbb{E}[ \| \nabla F(\tilde{w})\|^2 ] = \frac{1}{m+1}\sum_{t=0}^{m} \mathbb{E}[ \| \nabla F(w_{t})\|^2 ] \\ &\qquad \leq \frac{2}{\eta(m+1)} \mathbb{E}[ F(w_{0}) - F^*] \\ & \qquad + \frac{1}{m+1} \left( \sum_{t=0}^{m} \mathbb{E}[ \| \nabla F(w_{t}) - v_{t} \|^2 ]  
 - ( 1 - L\eta ) \sum_{t=0}^{m} \mathbb{E} [ \| v_{t} \|^2 ] \right) \\
 &\qquad \overset{\eqref{eq_nc_001}}{\leq} \frac{2}{\eta(m+1)} \mathbb{E}[ F(w_{0}) - F^*] + \mathbb{E}[ \| \nabla F(w_{0}) - v_{0} \|^2 ] \\
 &\qquad\qquad\qquad + \frac{1}{m+1} \left( L^2 \eta^2 \sum_{t=0}^{m} \sum_{j=1}^{t} \mathbb{E}[\| v_{j-1} \|^2]  
 - ( 1 - L\eta ) \sum_{t=0}^{m} \mathbb{E} [ \| v_{t} \|^2 ] \right) \\
 &\qquad \overset{\eqref{eq_nc_002}}{\leq} \frac{2}{\eta(m+1)} \mathbb{E}[ F(w_{0}) - F^*] + \mathbb{E}[ \| \nabla F(w_{0}) - v_{0} \|^2 ] \\
 &\qquad \overset{\eqref{eq_prob_bound_w0}}{\leq} \frac{2}{\eta(m+1)} \mathbb{E}[ F(w_{0}) - F^*] + \frac{1}{b} \mathbb{E}[ \| \nabla f (w_{0} ; \xi) \|^2 ]. 
\end{align*}

For any given $w_0$ and $b = \sqrt{m+1}$, we could achieve the desired result. 
\end{proof}

Based on Theorem~\ref{thm:nonconvex_SARAH_IN}, we are able to derive the following total complexity for iSARAH-IN in the non-convex case. 

\begin{cor}\label{cor:nonconvex_1}
\textit{Suppose that Assumption \ref{ass_Lsmooth} holds and $\mathbb{E} [ \| \nabla f (w_{0}; \xi) \|^2 ]$ is finite. Consider  iSARAH-IN (Algorithm \ref{isarah_in}) with the learning rate $\eta = \Ocal\left(\frac{1}{L \sqrt{m+1}}\right)$ and the number of samples $b = \sqrt{m+1}$, where $m$ is the total number of iterations, then  $\|\nabla F(\tilde{w})\|^2$ converges
sublinearly in expectation  with a rate of $\Ocal\left(\sqrt{\frac{1}{m+1}}\right)$, and therefore, the total complexity to achieve an $\epsilon$-accurate solution is $\Ocal(1/\epsilon^2)$.   }
\end{cor}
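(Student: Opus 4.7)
The plan is to derive the corollary as a direct algebraic consequence of Theorem \ref{thm:nonconvex_SARAH_IN}, which already provides the key bound
\begin{align*}
\mathbb{E}[\|\nabla F(\tilde{w})\|^2] \leq \frac{2}{\eta(m+1)}[F(w_0) - F^*] + \frac{1}{\sqrt{m+1}}\mathbb{E}[\|\nabla f(w_0;\xi)\|^2].
\end{align*}
So the work consists entirely in plugging in the stated parameter choices and then solving for $m$ in terms of $\epsilon$.

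First I would substitute the learning rate $\eta = \mathcal{O}(1/(L\sqrt{m+1}))$ (which satisfies the requirement $\eta \leq 2/(L(\sqrt{1+4m}+1))$ for an appropriate absolute constant, since $\sqrt{1+4m}+1 = \Theta(\sqrt{m+1})$) into the first term of the bound. This turns $\tfrac{2}{\eta(m+1)}$ into $\tfrac{\mathcal{O}(L)}{\sqrt{m+1}}$, so that both terms in the upper bound scale as $\tfrac{1}{\sqrt{m+1}}$ multiplied by a constant depending only on $L$, $F(w_0) - F^*$, and $\mathbb{E}[\|\nabla f(w_0;\xi)\|^2]$. This establishes the advertised $\mathcal{O}(1/\sqrt{m+1})$ sublinear rate.

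Next I would invert this rate to find the number of inner iterations $m$ needed to guarantee $\mathbb{E}[\|\nabla F(\tilde{w})\|^2] \leq \epsilon$. Setting the right-hand side equal to $\epsilon$ and solving yields $\sqrt{m+1} = \mathcal{O}(1/\epsilon)$, hence $m = \mathcal{O}(1/\epsilon^2)$. Finally, the total stochastic gradient complexity is the cost $b$ of forming $v_0$ plus the two stochastic gradient evaluations required per inner iteration, namely
\begin{align*}
b + 2m = \sqrt{m+1} + 2m = \mathcal{O}\!\left(\frac{1}{\epsilon}\right) + \mathcal{O}\!\left(\frac{1}{\epsilon^2}\right) = \mathcal{O}\!\left(\frac{1}{\epsilon^2}\right),
\end{align*}
which is the claimed total complexity.

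There is no real obstacle here; the proof is a routine substitution, mirroring exactly the argument used for Corollary \ref{cor:generalconvex_1} in the non-strongly convex case. The only minor care needed is to ensure that the chosen constant in $\eta = \mathcal{O}(1/(L\sqrt{m+1}))$ is small enough to fit under the admissible threshold $2/(L(\sqrt{1+4m}+1))$ from Theorem \ref{thm:nonconvex_SARAH_IN}, which holds for all $m \geq 0$ with a fixed absolute constant, so the hypothesis of the theorem is satisfied throughout.
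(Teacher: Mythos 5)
Your proposal is correct and follows essentially the same route as the paper: plug the parameter choices into the bound of Theorem~\ref{thm:nonconvex_SARAH_IN}, solve $\Ocal(1/\sqrt{m+1}) \leq \epsilon$ to get $m = \Ocal(1/\epsilon^2)$, and count the total work as $b + 2m = \sqrt{m+1} + 2m = \Ocal(1/\epsilon^2)$. If anything, you are slightly more careful than the paper's terse proof, since you explicitly verify that a constant $c \leq 2/3$ in $\eta = c/(L\sqrt{m+1})$ fits under the admissible threshold $2/(L(\sqrt{1+4m}+1))$.
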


\begin{proof}
Same as non-strongly convex case, since $\mathbb{E} [ \| \nabla f (w_{0}; \xi) \|^2 ]$ is finite, to achieve $\mathbb{E}[ \| \nabla F(\tilde{w})\|^2 ] \leq \epsilon$ we need $m = \Ocal(1/\epsilon^2)$ and hence the total work is $\sqrt{m} + 2m = \Ocal\left(\frac{1}{\epsilon} + \frac{1}{\epsilon^2} \right) = \Ocal\left(\frac{1}{\epsilon^2} \right)$. 
\end{proof}

\subsection{Multiple-loop iSARAH Results}

In this section, we analyze multiple-loop results of Inexact SARAH (Algorithm \ref{isarah}).

\subsubsection{Strongly Convex Case}

We now turn to the discussion on the convergence of iSARAH under the strong convexity assumption on $F$. 

\begin{thm}\label{thm:multiple_loop_stronglyconvex_01}
\textit{Suppose that Assumptions \ref{ass_Lsmooth}, \ref{ass_stronglyconvex}, \ref{ass_convex}, and \ref{ass_bounded_gradient_solution} hold. Consider iSARAH (Algorithm \ref{isarah}) with the choice of $\eta$, $m$, and $b$ such that
\begin{align*}
\alpha &= \frac{1}{\mu \eta (m + 1)} + \frac{ \eta L}{2 - \eta L} + \frac{4 \kappa - 2}{b (2 - \eta L)} < 1.  
\end{align*}
(Note that $\kappa = L/\mu$.) Then, we have
\begin{align*}
\mathbb{E}[ \| \nabla F(\tilde{w}_s)\|^2 ] - \Delta  \leq \alpha^s ( \| \nabla F(\tilde{w}_0)\|^2 - \Delta), \tagthis \label{eq_multiple_loop_stronglyconvex_01}
\end{align*}
where
\begin{align*}
\Delta = \frac{\delta}{1- \alpha} \ \text{and} \ \delta = \frac{4}{b(2 - \eta L)}  \sigma_*^2.
\end{align*}}
\end{thm}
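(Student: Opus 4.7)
The plan is to reduce the multi-loop statement to a one-step affine recursion on $\mathbb{E}[\|\nabla F(\tilde{w}_s)\|^2]$ and then solve this recursion in closed form. The entire convergence rate will come from Lemma~\ref{lem_keylemma_convex} applied to a single outer iteration, combined with strong convexity to trade the function-value term $F(w_0)-F(w_*)$ for a multiple of $\|\nabla F(w_0)\|^2$.

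First, I would apply Lemma~\ref{lem_keylemma_convex} with $w_0 = \tilde{w}_{s-1}$ and $\tilde{w} = \tilde{w}_s$, treating all expectations as conditional on $\tilde{w}_{s-1}$ (since the inner randomness in iSARAH-IN is independent of the previous outer iterates). The lemma requires $\eta \le 1/L$, which is encoded in the condition $\alpha<1$ (in particular the middle term $\eta L/(2-\eta L)$ must be less than $1$, forcing $\eta < 1/L$). This gives a bound involving three contributions: $(2/[\eta(m+1)])\,\mathbb{E}[F(w_0)-F(w_*)]$, $(\eta L/(2-\eta L))\,\mathbb{E}[\|\nabla F(w_0)\|^2]$, and the ``sample-size'' term involving $4L[F(w_0)-F(w_*)]$, $2\mathbb{E}[\|\nabla f(w_*;\xi)\|^2]$, and $-\mathbb{E}[\|\nabla F(w_0)\|^2]$, all divided by $b(2-\eta L)/2$.

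Next, I would convert the two $F(w_0)-F(w_*)$ terms into gradient-norm terms using strong convexity \eqref{eq:strongconvexity2}, i.e. $F(w_0)-F(w_*) \le \|\nabla F(w_0)\|^2/(2\mu)$, and bound the noise term via Assumption~\ref{ass_bounded_gradient_solution}, i.e. $\mathbb{E}[\|\nabla f(w_*;\xi)\|^2] \le \sigma_*^2$. Collecting coefficients of $\mathbb{E}[\|\nabla F(\tilde{w}_{s-1})\|^2]$ yields
\begin{align*}
\frac{1}{\mu\eta(m+1)} + \frac{\eta L}{2-\eta L} + \frac{4L/(2\mu) - 2}{b(2-\eta L)} \;=\; \frac{1}{\mu\eta(m+1)} + \frac{\eta L}{2-\eta L} + \frac{4\kappa - 2}{b(2-\eta L)} \;=\; \alpha,
\end{align*}
while the remaining constant term is exactly $\delta = 4\sigma_*^2/[b(2-\eta L)]$. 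After taking total expectation, this produces the one-step recursion
\begin{align*}
\mathbb{E}[\|\nabla F(\tilde{w}_s)\|^2] \;\le\; \alpha\,\mathbb{E}[\|\nabla F(\tilde{w}_{s-1})\|^2] + \delta.
\end{align*}

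Finally, I would solve this affine recursion by shifting with the fixed point $\Delta = \delta/(1-\alpha)$: since $\alpha\Delta + \delta = \Delta$, subtracting $\Delta$ from both sides gives $\mathbb{E}[\|\nabla F(\tilde{w}_s)\|^2] - \Delta \le \alpha\bigl(\mathbb{E}[\|\nabla F(\tilde{w}_{s-1})\|^2] - \Delta\bigr)$. A straightforward induction on $s$ then yields \eqref{eq_multiple_loop_stronglyconvex_01}. The only slightly delicate step is the bookkeeping in the second paragraph — making sure the $-\mathbb{E}[\|\nabla F(w_0)\|^2]/b$ contribution combines correctly with the $+4\kappa\mathbb{E}[\|\nabla F(w_0)\|^2]/b$ piece to produce the $(4\kappa-2)$ numerator; the rest is routine manipulation.
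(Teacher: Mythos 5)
Your proposal is correct and follows essentially the same route as the paper: apply Lemma~\ref{lem_keylemma_convex} to one outer iteration, convert $F(\tilde{w}_{s-1})-F(w_*)$ to $\|\nabla F(\tilde{w}_{s-1})\|^2/(2\mu)$ via \eqref{eq:strongconvexity2}, bound the noise by $\sigma_*^2$ via \eqref{eq_bounded_gradient_solution}, obtain the recursion $\mathbb{E}[\|\nabla F(\tilde{w}_s)\|^2]\le\alpha\,\mathbb{E}[\|\nabla F(\tilde{w}_{s-1})\|^2]+\delta$, and iterate --- your fixed-point shift with $\Delta=\delta/(1-\alpha)$ is just a cleaner packaging of the paper's geometric-sum unrolling. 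The only blemish is a typo in your intermediate display: the coefficient should read $8L/(2\mu)$ rather than $4L/(2\mu)$ (the factor $2/(2-\eta L)$ multiplies the $4L$ term), but your final expression $(4\kappa-2)/[b(2-\eta L)]$ is exactly right.
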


\begin{proof}
By Lemma \ref{lem_keylemma_convex}, with $\tilde{w} = \tilde{w}_s$ and $w_0 = \tilde{w}_{s-1}$, we have
\allowdisplaybreaks
\begin{align*}
& \mathbb{E}[ \| \nabla F(\tilde{w}_s)\|^2 ] \\ 
& \qquad \leq \frac{2}{\eta (m + 1)} \mathbb{E} [ F(\tilde{w}_{s-1}) - F(w_{*})]  
+ \frac{ \eta L}{2 - \eta L} \mathbb{E} [ \| \nabla F(\tilde{w}_{s-1}) \|^2 ] \\ &\qquad + \frac{2}{2 - \eta L} \left( \frac{4 L \mathbb{E} [F(\tilde{w}_{s-1}) - F(w_*)] + 2 \mathbb{E} \left[ \| \nabla f (w_{*}; \xi) \|^2 \right] - \mathbb{E} [\| \nabla F(\tilde{w}_{s-1}) \|^2]}{b} \right) \\
& \qquad \overset{\eqref{eq:strongconvexity2}}{\leq} \left( \frac{1}{\mu \eta (m + 1)} + \frac{ \eta L}{2 - \eta L} + \frac{4 \kappa - 2}{b (2 - \eta L)} \right) \mathbb{E} [ \| \nabla F(\tilde{w}_{s-1}) \|^2 ] \\
& \qquad \qquad + \frac{4}{b(2 - \eta L)}  \mathbb{E} \left[ \| \nabla f (w_{*}; \xi) \|^2 \right] \\
& \qquad \overset{\eqref{eq_bounded_gradient_solution}}{\leq} \left( \frac{1}{\mu \eta (m + 1)} + \frac{ \eta L}{2 - \eta L} + \frac{4 \kappa - 2}{b (2 - \eta L)} \right) \mathbb{E} [ \| \nabla F(\tilde{w}_{s-1}) \|^2 ] \\
& \qquad + \frac{4}{b(2 - \eta L)}  \sigma_*^2 \tagthis \label{eq_multiple_loop_stronglyconvex_0001} \\
& \qquad = \alpha \mathbb{E} [ \| \nabla F(\tilde{w}_{s-1}) \|^2 ] + \delta \\
& \qquad \leq \alpha^s \| \nabla F(\tilde{w}_{0})\|^2 + \alpha^{s-1} \delta +  \dots + \alpha \delta + \delta  \\
& \qquad \leq \alpha^s \| \nabla F(\tilde{w}_{0})\|^2 + \delta \frac{1 - \alpha^s}{1 - \alpha} \\
& \qquad = \alpha^s  \| \nabla F(\tilde{w}_0)\|^2 + \Delta(1 - \alpha^s) \\
& \qquad = \alpha^s ( \| \nabla F(\tilde{w}_0)\|^2 - \Delta) + \Delta. 
\end{align*}
By adding $-\Delta$ to both sides, we achieve the desired result. 
\end{proof}

Based on Theorem~\ref{thm:multiple_loop_stronglyconvex_01}, we are able to derive the following total complexity for iSARAH in the strongly convex case. 

\begin{cor}\label{cor:multiple_loop_stronglyconvex_01}
\textit{Let $\eta = \frac{2}{5 L}$, $m = 20\kappa - 1$, and $b = \max\left\{ 20\kappa - 10, \frac{20 \sigma_*^2}{\epsilon}\right\}$ in Theorem \ref{thm:multiple_loop_stronglyconvex_01}. Then, the total work complexity to achieve $\mathbb{E}[ \| \nabla F(\tilde{w}_s)\|^2 ] \leq \epsilon$ is $\Ocal\left(  \max\left\{  \frac{\sigma_*^2}{\epsilon}, \kappa \right\} \log\left( \frac{1}{\epsilon} \right)   \right)$. }
\end{cor}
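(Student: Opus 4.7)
The plan is to substitute the prescribed parameter values into the recursion \eqref{eq_multiple_loop_stronglyconvex_01} of Theorem \ref{thm:multiple_loop_stronglyconvex_01}, verify that the contraction factor $\alpha$ is bounded away from $1$ by an absolute constant, show that the additive ``noise floor'' $\Delta$ is driven below $\epsilon/4$ by the choice of $b$, and then count the outer iterations needed for the geometric term to fall below $3\epsilon/4$. Multiplying that outer-loop count by the per-outer-iteration cost of iSARAH-IN then yields the advertised total sample complexity.

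In concrete terms, I would first plug in $\eta = 2/(5L)$ to obtain $\eta L = 2/5$ and $2-\eta L = 8/5$, so that $\eta L/(2-\eta L) = 1/4$. Using $m+1 = 20\kappa$ and the identity $\mu\kappa = L$, the first term in $\alpha$ simplifies to $1/(\mu\eta(m+1)) = 1/8$. For the third term, $(4\kappa-2)/(b(2-\eta L)) = 5(4\kappa-2)/(8b)$, which the lower bound $b \geq 20\kappa - 10$ forces to be at most $1/8$. Adding these gives $\alpha \leq 1/8 + 1/4 + 1/8 = 1/2$. Similarly, $\delta = 4\sigma_*^2/(b(2-\eta L)) = 5\sigma_*^2/(2b)$, and the other lower bound $b \geq 20\sigma_*^2/\epsilon$ yields $\delta \leq \epsilon/8$; combined with $\alpha \leq 1/2$ this gives $\Delta = \delta/(1-\alpha) \leq \epsilon/4$.

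Plugging these bounds back into \eqref{eq_multiple_loop_stronglyconvex_01}, I obtain $\mathbb{E}[\|\nabla F(\tilde{w}_s)\|^2] \leq \alpha^s\|\nabla F(\tilde{w}_0)\|^2 + \Delta \leq (1/2)^s\|\nabla F(\tilde{w}_0)\|^2 + \epsilon/4$, so it suffices to take $s$ large enough that $(1/2)^s\|\nabla F(\tilde{w}_0)\|^2 \leq 3\epsilon/4$, i.e., $s = \mathcal{O}(\log(1/\epsilon))$ outer iterations. Each outer iteration of iSARAH runs iSARAH-IN at a cost of $b + 2(m-1)$ stochastic gradient evaluations, which with our choices equals $\mathcal{O}(\kappa + \sigma_*^2/\epsilon) = \mathcal{O}(\max\{\kappa,\, \sigma_*^2/\epsilon\})$. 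Multiplying gives the claimed complexity $\mathcal{O}(\max\{\kappa,\, \sigma_*^2/\epsilon\}\log(1/\epsilon))$. I do not anticipate a real obstacle: all the analytical content is inherited from Theorem \ref{thm:multiple_loop_stronglyconvex_01}, and the corollary is essentially arithmetic to certify that the prescribed constants enforce $\alpha \leq 1/2$ and $\Delta \leq \epsilon/4$ simultaneously.
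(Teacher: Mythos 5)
Your proposal is correct and follows essentially the same route as the paper: both verify $\alpha \leq \frac{1}{8}+\frac{1}{4}+\frac{1}{8} = \frac{1}{2}$ and a noise floor of $\epsilon/4$ from the choice of $b$, then take $s = \Ocal(\log(1/\epsilon))$ outer iterations and multiply by the per-iteration cost $\Ocal(b+m)$. The only cosmetic difference is that you bound $\alpha$ and $\Delta$ in the theorem's final statement, whereas the paper unrolls the one-step recursion \eqref{eq_multiple_loop_stronglyconvex_0001} directly with the numerical constants; the arithmetic and the conclusion are identical.
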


\begin{proof}
With $\eta = \frac{2}{5 L}$, $m = 20\kappa - 1$, and $b = \max\left\{ 20\kappa - 10, \frac{20 \sigma_*^2}{\epsilon}\right\}$, from \eqref{eq_multiple_loop_stronglyconvex_0001}, we have
\begin{align*}
\mathbb{E}[ \| \nabla F(\tilde{w}_s)\|^2 ] & \leq \left(\frac{1}{8} + \frac{1}{4} + \frac{1}{8} \right) \mathbb{E} [ \| \nabla F(\tilde{w}_{s-1}) \|^2 ] + \frac{\epsilon}{8} \\
& \leq \frac{1}{2} \mathbb{E} [ \| \nabla F(\tilde{w}_{s-1}) \|^2 ] + \frac{\epsilon}{8} \\
& \leq \frac{1}{2^s} \| \nabla F(\tilde{w}_0) \|^2 + \frac{\epsilon}{4}. 
\end{align*}
Since $\mathbb{E} [ \| \nabla f (w_{*}; \xi) \|^2 ]$ is finite, to guarantee that  $\mathbb{E}[ \| \nabla F(\tilde{w}_s)\|^2 ] \leq \epsilon$, it is sufficient to make $\frac{1}{2^s} \| \nabla F(\tilde{w}_0) \|^2 = \frac{3}{4}\epsilon$ or equivalently $s = \log\left(\frac{\| \nabla F(\tilde{w}_0) \|^2}{\frac{3}{4} \epsilon} \right)$. This implies the total complexity to achieve an $\epsilon$-accuracy solution is $(b + m)s = \Ocal\left( \left( \max\left\{  \frac{\sigma_*^2}{\epsilon}, \kappa \right\} + \kappa  \right) \log\left( \frac{1}{\epsilon} \right)   \right) = \Ocal\left(  \max\left\{  \frac{\sigma_*^2}{\epsilon}, \kappa \right\} \log\left( \frac{1}{\epsilon} \right)   \right)$. 
\end{proof}

\subsubsection{Non-Strongly Convex Case}

We turn to the analysis of the convergence rate of the multiple-loop iSARAH in the non-strongly convex case. As mentioned in the introduction, we are able to achieve best sample complexity rates of any stochastic algorithm, to the best of our knowledge, but under an additional reasonably mild assumption. We introduce this assumption below.  

\begin{ass}\label{ass_generalconvex_01}
\textit{Let $\tilde{w}_0$,$\dots$,$\tilde{w}_{\cal T}$ be the (outer) iterations of Algorithm \ref{isarah}. We assume that there exist $M > 0$ and $N > 0$ such that, $k = 0,\dots, {\cal T}$,
\begin{align*}
F(\tilde{w}_k) - F(w_*) \leq M \| \nabla F(\tilde{w}_k) \|^2 + N,  \tagthis \label{eq_ass_generalconvex_01}
\end{align*}
where $F(w_*)$ is the optimal value of $F$. }
\end{ass}

Let us discuss Assumption \ref{ass_generalconvex_01}.  First, we note that the assumption only requires to hold for the outer iterations $\tilde{w}_0$,$\dots$,$\tilde{w}_{\cal T}$ of Algorithm \ref{isarah} instead of holding for all $w \in \mathbb{R}^d$ or for all of the inner iterates. Moreover, this assumption is clearly weaker than the Polyak-Lojasiewicz (PL) condition, which has been studied and discussed in \cite{Polyak1964,nesterov2006cubic,polyak_condition} which itself is weaker than strong convexity assumption.  Under PL condition, we simply have $N=0$ in \eqref{eq_ass_generalconvex_01} and as we will discuss below we can recover better convergence rate in this case. 
On the other hand, if PL condition does not hold but if the sequence of iterates $\{\tilde{w}_k\}$ remains in a  set, say ${\cal W}$, on which the objective function is bounded from above, that is
for all $w\in {\cal W}$, $F(w)\leq F_{\max}$ for some finite value $F_{max}$, then Assumption \ref{ass_generalconvex_01}
is satisfied with $N= F_{\max}-F(w_*)$ and $M=0$, where $F(w_*)$ is the optimal value of $F$. In other words, Assumption \ref{ass_generalconvex_01} is a relaxation of the boundedness assumption and the PL condition. As an example, consider the following modification of the logistic function (for some $\lambda >0$)
\begin{align*}
F(w)= \left \{\begin{array}{lr} \log (1+e^{-w}) & w\geq -2 \\
 \log (1+e^{-w})+ \frac{\lambda}{2} (w+2)^2& w<-2 \end{array}\right.
\end{align*}
which is not strongly convex and does not satisfy the PL condition.  This function can be considered in practice instead of the usual logistic function as it simply adds somewhat larger penalty on solutions that are far away from the optimal. Notice that this function is continuous and continuously differentiable. When, $w < -2$, the function is strongly convex and satisfies the PL condition. When $w \geq -2$, the function is bounded in $(0, \log(1 + e^2)]$. Therefore, it satisfies Assumption~\ref{ass_generalconvex_01}. This is a simplified example to make derivations easy. However, it is possible to generalize this example by  modifying standard logistic  loss by adding strongly convex penalty outside some ball containing the optimal solution. 

\begin{thm}\label{thm:generalconvex_SARAH_FULL}
\textit{Suppose that Assumptions \ref{ass_Lsmooth}, \ref{ass_convex}, \ref{ass_bounded_gradient_solution}, and \ref{ass_generalconvex_01} hold. Consider iSARAH (Algorithm \ref{isarah}) with the choice of $\eta$, $m$, and $b$ such that
\begin{align*}
\alpha_c &= \frac{2 M}{\eta (m+1)} + \frac{ \eta L}{2 - \eta L} + \frac{8 L M - 1}{b(2 - \eta L)} < 1.  
\end{align*}
Then, we have
\begin{align*}
\mathbb{E}[ \| \nabla F(\tilde{w}_s)\|^2 ] - \Delta_c  \leq \alpha_c^s ( \| \nabla F(\tilde{w}_0)\|^2 - \Delta_c), \tagthis \label{eq_multiple_loop_generalconvex_01}
\end{align*}
where
\begin{align*}
\Delta_c = \frac{\delta_c}{1- \alpha_c} \ \text{and} \ \delta_c = \frac{2 N}{\eta (m+1)} + \frac{8 L N}{b(2 - \eta L)} + \frac{4 \sigma_*^2}{b(2 - \eta L)}.
\end{align*}}
\end{thm}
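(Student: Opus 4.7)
The plan is to follow the same template used for Theorem \ref{thm:multiple_loop_stronglyconvex_01}, with Assumption \ref{ass_generalconvex_01} playing the role that strong convexity (inequality \eqref{eq:strongconvexity2}) played there. First I would invoke Lemma \ref{lem_keylemma_convex} on a generic outer iteration, identifying the input $w_0$ with $\tilde{w}_{s-1}$ and the output $\tilde{w}$ with $\tilde{w}_s$. This gives an upper bound on $\mathbb{E}[\|\nabla F(\tilde{w}_s)\|^2]$ in terms of $\mathbb{E}[F(\tilde{w}_{s-1})-F(w_*)]$, $\mathbb{E}[\|\nabla F(\tilde{w}_{s-1})\|^2]$, and $\mathbb{E}[\|\nabla f(w_*;\xi)\|^2]$.

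Next I would use Assumption \ref{ass_generalconvex_01} to replace every occurrence of $F(\tilde{w}_{s-1})-F(w_*)$ by $M\|\nabla F(\tilde{w}_{s-1})\|^2+N$, and Assumption \ref{ass_bounded_gradient_solution} to replace $\mathbb{E}[\|\nabla f(w_*;\xi)\|^2]$ by $\sigma_*^2$. Collecting the coefficients of $\mathbb{E}[\|\nabla F(\tilde{w}_{s-1})\|^2]$ from the three sources in Lemma \ref{lem_keylemma_convex} — the $\frac{2}{\eta(m+1)}$ term, the $\frac{\eta L}{2-\eta L}$ term, and the $\frac{2}{2-\eta L}\cdot\frac{1}{b}$ term (which contributes both $8LM$ from the $4L[F-F_*]$ piece and a negative contribution from the $-\|\nabla F(w_0)\|^2$ piece) — yields exactly the coefficient $\alpha_c$ stated in the theorem. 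The leftover terms involving $N$ and $\sigma_*^2$ aggregate into $\delta_c$ as defined.

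The last step is a routine one-step linear recursion: I will have established
\begin{align*}
\mathbb{E}[\|\nabla F(\tilde{w}_s)\|^2] \leq \alpha_c\,\mathbb{E}[\|\nabla F(\tilde{w}_{s-1})\|^2] + \delta_c,
\end{align*}
and unrolling over $s$ outer loops gives $\mathbb{E}[\|\nabla F(\tilde{w}_s)\|^2] \leq \alpha_c^s\|\nabla F(\tilde{w}_0)\|^2 + \delta_c\frac{1-\alpha_c^s}{1-\alpha_c}$. Since $\alpha_c<1$ by hypothesis, the geometric sum $\frac{1-\alpha_c^s}{1-\alpha_c}$ is well defined, and rewriting using $\Delta_c = \delta_c/(1-\alpha_c)$ and subtracting $\Delta_c$ from both sides produces \eqref{eq_multiple_loop_generalconvex_01}.

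There is no real obstacle here beyond bookkeeping: the entire argument parallels the strongly convex proof almost verbatim, and the only genuinely new ingredient is the substitution via Assumption \ref{ass_generalconvex_01}, whose additive constant $N$ is what forces the extra $\frac{2N}{\eta(m+1)}+\frac{8LN}{b(2-\eta L)}$ contribution inside $\delta_c$ (the $\frac{4\sigma_*^2}{b(2-\eta L)}$ part is identical to the strongly convex case). The care required is simply to keep track of all coefficients correctly when grouping terms into $\alpha_c$ versus $\delta_c$, and to note that — unlike the strongly convex setting — the fixed point $\Delta_c$ does not shrink to zero even if $\sigma_*=0$, because $N$ itself is a constant independent of $b,m,\eta$.
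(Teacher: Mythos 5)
Your proposal matches the paper's proof essentially verbatim: the paper likewise applies Lemma \ref{lem_keylemma_convex} with $w_0=\tilde{w}_{s-1}$ and $\tilde{w}=\tilde{w}_s$, substitutes Assumptions \ref{ass_generalconvex_01} and \ref{ass_bounded_gradient_solution}, groups coefficients into $\alpha_c$ and $\delta_c$, and unrolls the one-step linear recursion exactly as you describe. One harmless bookkeeping remark: collecting the $-\mathbb{E}[\|\nabla F(\tilde{w}_{s-1})\|^2]/b$ piece exactly gives numerator $8LM-2$ rather than the stated $8LM-1$, but since $2-\eta L>0$ this only tightens the bound, and the paper itself writes $8LM-1$, so your claim that the terms collect into $\alpha_c$ is fine.
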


\begin{proof}
By Lemma \ref{lem_keylemma_convex}, with $\tilde{w} = \tilde{w}_s$ and $w_0 = \tilde{w}_{s-1}$, we have
\allowdisplaybreaks
\begin{align*}
& \mathbb{E}[ \| \nabla F(\tilde{w}_s)\|^2 ] \\
& \qquad \leq \frac{2}{\eta (m + 1)} \mathbb{E} [ F(\tilde{w}_{s-1}) - F(w_{*})]  
+ \frac{ \eta L}{2 - \eta L} \mathbb{E} [ \| \nabla F(\tilde{w}_{s-1}) \|^2 ] \\ &\qquad + \frac{2}{2 - \eta L} \left( \frac{4 L \mathbb{E} [F(\tilde{w}_{s-1}) - F(w_*)] + 2 \mathbb{E} \left[ \| \nabla f (w_{*}; \xi) \|^2 \right] - \mathbb{E} [\| \nabla F(\tilde{w}_{s-1}) \|^2]}{b} \right) \\
& \qquad \overset{\eqref{eq_ass_generalconvex_01}}{\leq} \left( \frac{2 M}{\eta (m+1)} + \frac{ \eta L}{2 - \eta L} + \frac{8 L M - 1}{b(2 - \eta L)} \right) \mathbb{E} [ \| \nabla F(\tilde{w}_{s-1}) \|^2 ] \\
& \qquad + \frac{2 N}{\eta (m+1)} + \frac{8 L N}{b(2 - \eta L)} + \frac{4 \mathbb{E} \left[ \| \nabla f (w_{*}; \xi) \|^2 \right]}{b(2 - \eta L)} \\
& \qquad \overset{\eqref{eq_bounded_gradient_solution}}{\leq} \left( \frac{2 M}{\eta (m+1)} + \frac{ \eta L}{2 - \eta L} + \frac{8 L M - 1}{b(2 - \eta L)} \right) \mathbb{E} [ \| \nabla F(\tilde{w}_{s-1}) \|^2 ] \\
& \qquad + \frac{2 N}{\eta (m+1)} + \frac{8 L N}{b(2 - \eta L)} + \frac{4 \sigma_*^2}{b(2 - \eta L)} \\
& \qquad= \alpha_c \mathbb{E} [ \| \nabla F(\tilde{w}_{s-1}) \|^2 ] + \delta_c \\
& \qquad \leq \alpha_c^s ( \| \nabla F(\tilde{w}_0)\|^2 - \Delta_c) + \Delta_c. \qedhere 
\end{align*}
\end{proof}


Now choosing the appropriate values for $\eta$, $m $, and $b$, we can follow the proof of Corollary~\ref{cor:multiple_loop_stronglyconvex_01} to achieve the following complexity result. 
\begin{cor}\label{cor:multiple_loop_generalconvex_01}
\textit{In Theorem~\ref{thm:generalconvex_SARAH_FULL}, let $\eta = \frac{2}{5 L}$, $m = \max\left\{  40 L M - 1 , \frac{120 L N}{ \epsilon} - 1 \right \}$, and $b = \max\left \{ 40 L M - 5 , \frac{120 L N}{\epsilon} , \frac{60 \sigma_*^2}{\epsilon} \right \}$. Then, the total work complexity to achieve $\mathbb{E}[ \| \nabla F(\tilde{w}_s)\|^2 ] \leq \epsilon$ is 
\begin{align*}
    (b + m)s = \Ocal\left(  \max\left\{ LM , \frac{\max\{ L N , \sigma_*^2 \}}{\epsilon}  \right \} \log \left( \frac{1}{\epsilon}  \right)  \right).
\end{align*}}
\end{cor}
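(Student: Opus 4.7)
The plan is to mirror the strategy of Corollary~\ref{cor:multiple_loop_stronglyconvex_01}: substitute the specified values of $\eta$, $m$, and $b$ into the bound from Theorem~\ref{thm:generalconvex_SARAH_FULL}, show that the contraction factor $\alpha_c$ is at most $1/2$ and that the additive term $\delta_c$ is at most a constant multiple of $\epsilon$, and then translate the resulting geometric decay into a bound on the number of outer iterations.

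First I would fix $\eta = 2/(5L)$, which gives $\eta L = 2/5$ and $2 - \eta L = 8/5$, so in particular $\eta L/(2 - \eta L) = 1/4$. With this, I would bound each of the three summands of
\[ \alpha_c = \frac{2M}{\eta(m+1)} + \frac{\eta L}{2 - \eta L} + \frac{8LM - 1}{b(2 - \eta L)}\]
by $1/4$. Specifically, the condition $m+1 \geq 40LM$ yields $\frac{2M}{\eta(m+1)} = \frac{5LM}{m+1} \leq 1/8$, and the condition $b \geq 40LM - 5 = 5(8LM - 1)$ yields $\frac{8LM-1}{b(2-\eta L)} = \frac{5(8LM-1)}{8b} \leq 1/8$. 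Adding the three pieces gives $\alpha_c \leq 1/2$, and hence $1/(1-\alpha_c) \leq 2$.

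Next I would bound the three summands of $\delta_c = \frac{2N}{\eta(m+1)} + \frac{8LN}{b(2-\eta L)} + \frac{4\sigma_*^2}{b(2-\eta L)}$ by $\epsilon/24$ each: the choice $m+1 \geq 120LN/\epsilon$ makes $\frac{2N}{\eta(m+1)} = \frac{5LN}{m+1} \leq \epsilon/24$, the choice $b \geq 120LN/\epsilon$ makes $\frac{8LN}{b(2-\eta L)} = \frac{5LN}{b} \leq \epsilon/24$, and the choice $b \geq 60\sigma_*^2/\epsilon$ makes $\frac{4\sigma_*^2}{b(2-\eta L)} = \frac{5\sigma_*^2}{2b} \leq \epsilon/24$. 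Therefore $\delta_c \leq \epsilon/8$ and $\Delta_c = \delta_c/(1-\alpha_c) \leq \epsilon/4$. Plugging these into \eqref{eq_multiple_loop_generalconvex_01} yields
\[ \mathbb{E}[\|\nabla F(\tilde{w}_s)\|^2] \leq \left(\tfrac{1}{2}\right)^s \|\nabla F(\tilde{w}_0)\|^2 + \tfrac{\epsilon}{4}. \]

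Finally, I would pick $s = \lceil \log_2(4\|\nabla F(\tilde{w}_0)\|^2 / (3\epsilon)) \rceil = \mathcal{O}(\log(1/\epsilon))$ so that the geometric term is at most $3\epsilon/4$, giving $\mathbb{E}[\|\nabla F(\tilde{w}_s)\|^2] \leq \epsilon$. The per-outer-loop work is of order $b + m = \mathcal{O}\bigl(\max\{LM,\, LN/\epsilon,\, \sigma_*^2/\epsilon\}\bigr) = \mathcal{O}\bigl(\max\{LM, \max\{LN,\sigma_*^2\}/\epsilon\}\bigr)$, and multiplying by $s$ yields the announced total complexity. There is no real obstacle here beyond the arithmetic bookkeeping; the only thing to be careful about is that both $m$ and $b$ must simultaneously satisfy the two (resp.\ three) inequalities used above, which is exactly what the $\max$ in their definitions encodes.
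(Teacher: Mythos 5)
Your proposal is correct and follows exactly the route the paper intends: the paper gives no separate proof for this corollary but says to ``follow the proof of Corollary~\ref{cor:multiple_loop_stronglyconvex_01},'' and your substitution of $\eta = \frac{2}{5L}$ (so $\frac{\eta L}{2-\eta L} = \frac14$), the $\frac18$-bounds on the other two terms of $\alpha_c$, the $\frac{\epsilon}{24}$-bounds on the terms of $\delta_c$ giving $\Delta_c \leq \frac{\epsilon}{4}$, and the choice $s = \Ocal(\log(1/\epsilon))$ reproduce that argument verbatim in the non-strongly convex setting. The only nitpick is the stray claim that each summand of $\alpha_c$ is bounded ``by $1/4$'' when your own computation gives $\frac18 + \frac14 + \frac18 = \frac12$, which is what you actually use.
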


We can observe that, with the help of Assumption \ref{ass_generalconvex_01}, iSARAH could achieve the best known complexity among stochastic methods (those which do not have access to exact gradient computation) in the non-strongly convex case. 

\section{Conclusion}\label{sec_conclusion}

We have provided the analysis of the inexact version of SARAH, which requires only stochastic gradient information computed on a mini-batch of sufficient size. We provide the one-loop results (iSARAH-IN) in the non-strongly convex and non-convex cases. Moreover, we analyze the multiple-loop results (iSARAH) in the strongly convex case and with an additional assumption (Assumption~\ref{ass_generalconvex_01}) in the non-strongly convex case. With this Assumption \ref{ass_generalconvex_01},  which we argue is reasonable, iSARAH achieves the best known complexity among stochastic methods.

\section*{Acknowledgement}
Katya Scheinberg and Martin Tak\'a\v{c} were partially supported by the U.S. National Science Foundation, under award number  CCF-1618717 and  CCF-1740796.

\bibliographystyle{tfs}
\bibliography{reference}

\end{document}